\newcommand{\C} {\mathbb{C}}
\newcommand{\Q} {\mathbb{Q}}
\newcommand{\Z}{\mathbb{Z}}
\newcommand{\PP}{\mathbb{P}}
\newcommand{\NS}{\mathop{\rm NS}}
\newcommand{\Num}{\mathop{\rm Num}}
\newcommand{\MW}{\mathop{\rm MW}}
\newcommand{\disc}{\mathop{\rm disc}}
\newcommand{\Pic}{\mathop{\rm Pic}}
\newcommand{\rk}{\mathop {\rm rk}}
\newcommand{\Jac}{\mathop{\rm Jac}}
\newtheorem{Theorem}{Theorem}[section]
\newtheorem{Proposition}[Theorem]{Proposition}
\newtheorem{Lemma}[Theorem]{Lemma}
\newtheorem{Corollary}[Theorem]{Corollary}
\theoremstyle{remark}
\newtheorem{Claim}[Theorem]{Claim}
\newtheorem{Remark}[Theorem]{Remark}
\newtheorem{Criterion}[Theorem]{Criterion}
\newtheorem{Observation}[Theorem]{Observation}
\newtheorem{Example}[Theorem]{Example}
\theoremstyle{definition}
\newtheorem{Definition}[Theorem]{Definition}
\begin{document}

\title{Moduli of Gorenstein $\Q$-homology  projective planes}


\author{Matthias Sch\"utt}
\address{Institut f\"ur Algebraische Geometrie, Leibniz Universit\"at
  Hannover, Welfengarten 1, 30167 Hannover, Germany}
\email{schuett@math.uni-hannover.de}

\subjclass[2010]{14J28; 14J27}
\thanks{Funding   by ERC StG~279723 (SURFARI) 
 is gratefully acknowledged.}

\date{\today}

\begin{abstract}
We give a complete classification of complex $\Q$-homology  projective planes
with numerically trivial canonical bundle.
There are 31 types, and each has one-dimensional moduli.
In fact, all moduli curves are rational and defined over $\Q$,
and we determine all families explicitly using 
extremal rational elliptic surfaces 
and Enriques involutions of base change type.
\end{abstract}
%
%
 \maketitle

 \section{Introduction}
 \label{s:intro}
 
 Fake projective planes continue to be among the
most fascinating objects of study in algebraic geometry,
starting from Mumford's construction in the 70's \cite{Mumford}
and culminating in the classification
by Prasad-Yeung \cite{PY} and Cartwright-Steger \cite{CS}.
Here one is solely concerned with smooth projective surfaces
whose (co)homology resembles $\PP^2$,
i.e. with Betti numbers $b_2=1, b_1=b_3=0$.

It is then natural to weaken the conditions
by allowing for normal projective surfaces $S$
whose $\Q$-homology equals that of $\PP^2$.
Typically, one considers Gorenstein $\Q$-homology projective planes with quotient singularities;
here we restrict to isolated rational double point singularities.
By definition, they are obtained from
smooth projective surfaces by contracting a collection of $(-2)$-curves.
For their understanding it is thus crucial to study the configurations of these curves.

One
 distinguishes whether $K_S$ is ample, numerically trivial or anti-ample.
 The latter case
 corresponds to log del Pezzo surfaces 
 and is well understood.
 Indeed, Ye gives a complete classification of  $\Q$-homology log del Pezzo surfaces in \cite{Ye}.
We point out that with one exception they are isolated in moduli.
  At the other end of the scale, the general type $\Q$-homology projective planes
  have resisted a thorough investigation so far.
 
This paper addresses the case where $K_S$ is numerically trivial.
Here the minimal resolution  is an 
 Enriques surface by the Enriques--Kodaira classification of complex algebraic surfaces.
 By \cite{HKO}, there are 31 configurations of $(-2)$-curves on Enriques surfaces
 whose contraction may give a $\Q$-homology projective surface,
see Theorem \ref{thm:31}.
  Here the moduli picture has not been determined so far:
   29 of these root types  are supported by examples in \cite{HKO}, 
(some isolated, some occurring in one-dimensional families),
one further type can be found in \cite{RS}.
This paper's main goal is the full clarification of the picture,
in particular with respect to moduli,
by means of a complete classification of Enriques surfaces
supporting the 31 maximal root types.

\begin{Theorem}
\label{thm}
For the 31 maximal root types realizable on Enriques surfaces,
the following hold
\begin{enumerate}[(i)]
\item
the root types are supported on 1-dimensional families of Enriques surfaces;
\item
the moduli spaces  can have up to 3 different components;
\item
each family has rational base and is defined over $\Q$;
\item
each family can be parametrised explicitly,
see Table \ref{T2}.
\end{enumerate}
\end{Theorem}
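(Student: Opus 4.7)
The plan is to treat the 31 maximal root types case by case, realizing each on an explicit family of Enriques surfaces arising from base-change type involutions on covers of extremal rational elliptic surfaces, and then to verify the four assertions family by family.

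The core construction proceeds as follows. Start with an extremal rational elliptic surface $f\colon Y\to\PP^1$; by Miranda-Persson the possibilities for $Y$ are finite and classified by their reducible fibers, and the Mordell-Weil group is always torsion. Form a suitable degree $2$ base change $\pi\colon\PP^1\to\PP^1$ and consider the fibration $\tilde Y\to\PP^1$ obtained as the minimal resolution of the pullback; the choice of $\pi$ will be arranged so that $\tilde Y$ is a $K3$ surface. The deck transformation of $\pi$ induces an involution on $\tilde Y$ which, after composition with translation by the pullback of a $2$-torsion section of $Y$ if necessary, becomes a fixed-point-free Enriques involution $\tau$; the quotient $X=\tilde Y/\tau$ is then an Enriques surface. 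Every reducible fiber of $f$ contributes a configuration of $(-2)$-curves on $\tilde Y$ which descends to a root system of the same Dynkin type on $X$, and a combinatorial tally matches the resulting configurations to the 31 types from Theorem~\ref{thm:31}.

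For each of the 31 root types I would identify the extremal rational elliptic surface(s) $Y$ and the shape of $\pi$ producing that type, writing both down in Weierstrass form. Since $\pi$ depends, up to the $\mathrm{PGL}_2$-action on source and target, on a single cross-ratio once the location of its branch points relative to the reducible fibers of $f$ is prescribed, the construction yields precisely a one-parameter family, giving~(i). Assertion~(iii) follows because the extremal rational elliptic surfaces involved all have $\Q$-models and the free parameter is a rational coordinate on an open subset of $\PP^1_\Q$. The explicit Weierstrass data to be collected in Table~\ref{T2} will give~(iv). The multiple components in~(ii) arise when two non-isomorphic extremal rational elliptic surfaces, or inequivalent positions of the branch locus of $\pi$ relative to the reducible fibers of $f$, lead to the same root type but to generically non-isomorphic Enriques surfaces; the bound of three components will drop out of the enumeration, not from an a priori argument.

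The main obstacle is completeness: proving that \emph{every} Enriques surface realizing one of the 31 root types actually arises from such a base-change construction, so that the enumerated families exhaust the moduli space. This reduces to a lattice-theoretic problem on the $K3$-cover $\tilde X$: given the sublattice $R\subset\Pic(\tilde X)$ spanned by the preimages of the root type together with the action of the Enriques involution, one must exhibit an isotropic vector in $R^\perp$ invariant under suitable symmetries which induces an elliptic fibration descending through the Enriques involution to a rational elliptic surface, forcing the base-change origin. Once this is in place, the dimension count and the separation of components are controlled by comparing periods of the resulting elliptic fibrations with the Miranda-Persson list, and the rationality of the parameter over $\Q$ completes the argument. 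The case-by-case verification of Weierstrass equations, fiber types, and fixed-point freeness is routine but will dominate the length of the paper.
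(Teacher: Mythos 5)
Your overall architecture matches the paper's: realize each type via Enriques involutions of base change type over extremal rational elliptic surfaces, parametrize explicitly, and prove exhaustiveness by exhibiting an isotropic vector whose induced elliptic fibration has a smooth rational bisection (which is exactly what forces the base change origin, cf.\ Lemma \ref{lem:bct} and Proposition \ref{prop:base}). However, your description of the forward construction has two concrete gaps that would derail the count. First, the reducible fibers of an extremal rational elliptic surface contribute a root lattice of rank only $8$ on the quotient; a ``combinatorial tally'' of fiber configurations can never reach the rank-$9$ types of Theorem \ref{thm:31}. The ninth node is supplied by the smooth rational \emph{bisection}, i.e.\ the common image of the zero section $O$ and the translating section $P$ on the K3 cover, and this forces $P$ to be an integral section meeting the reducible fibers in components prescribed by Lemma \ref{lem:bi}. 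Your sketch never accounts for this curve. Second, and relatedly, the translating section is in general \emph{not} the pullback of a $2$-torsion section of the rational elliptic surface: it is induced from an anti-invariant section $P'$ of the quadratic twist $S'$, typically of infinite order (see $x(P')=\mu t-1$ for $A_9$). If you only twist and translate by $2$-torsion, you obtain the $2$-dimensional families of Example \ref{ex:1-2}, which by Lemma \ref{lem:1-dim} cannot support any rank-$9$ type as a whole.

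This also breaks your dimension count for (i). A quadratic base change has two branch points, and modulo the automorphisms of $\PP^1$ preserving the (generically $\geq 3$) singular fibers of the extremal surface this gives \emph{two} moduli, not one cross-ratio. The one-dimensionality of each family is not automatic from the construction; it is the codimension-one condition that the twist $S'$ admit an integral section $P'$ with $\deg x(P')\le 2$ meeting the ramified fibers and the prescribed fiber components correctly (equations \eqref{eq:x(P')} and the case analysis of \ref{sss:1f}--\ref{sss:2t}). Without imposing this, your families are too big; with it, the upper bound comes from Lemma \ref{lem:1-dim} and the lower bound from the explicit solutions. Finally, for (ii) the enumeration of components is not merely a matter of distinct extremal surfaces or branch loci: one must track which \emph{overlattices} of $R$ (equivalently, which nodal primitive closures of $R_0$) are supported on $(-2)$-curves, rule out impossible configurations by Gram-matrix rank arguments as in \ref{sss-4}, and separate the surviving families by discriminants of the covering K3 surfaces. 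Your proposal correctly anticipates that the bound of three ``drops out of the enumeration,'' but the enumeration itself requires this overlattice bookkeeping, which is absent from your plan.
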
 

%
%

The proof of Theorem \ref{thm} crucially relies on the observation
(in Proposition \ref{prop:base})
that each Enriques surface admits an elliptic fibration
which  arises from a base change type construction
following Kond\=o \cite{Kondo-Enriques} (see also \cite{HS}).
In fact, this is so constructive in nature
that it will allow us to carry out all parametrizations explicitly.

The paper is organised as follows.
Section \ref{s:latt} reviews  root types on Enriques surfaces and the underlying lattice theory
relevant for this paper.
In Section \ref{s:ell}, we relate this to elliptic fibrations
before introducing the main geometric technique
of Enriques involutions of base change type in Section \ref{s:tech}.
Section \ref{s:RES} reviews extremal rational elliptic surfaces
which will serve as the building blocks for the gluing method in Section \ref{s:glue}.
Sections \ref{s:fiber} and \ref{s:fam} then prepare for the computation
of the explicit families of Enriques surfaces which will be summarized in Section \ref{s:explicit},
proving most part of Theorem \ref{thm}.
The final section concerns the number of components of the moduli spaces,
thus completing the proof of Theorem \ref{thm}.
Throughout the paper, we will work out instructive examples in detail
to give a clear idea of the methods and techniques involved.

\subsubsection*{Conventions}
We work over $\C$ or any algebraically closed field $K$ of characteristic zero
(although most computations are carried out over $\Q$, in fact). 
All root lattices $A_n, D_k, E_l$ are taken to be negative-definite.
Orthogonal sums are indicated by a '$+$',
and likewise by $2R$ etc.
The notation $R(m)$ indicates the same abstract $\Z$-module as $R$,
but with intersection form multiplied by an integer $m$.

\section{Maximal root types on Enriques surfaces}
\label{s:latt}

By \cite{Ye} (see also the references therein)
there are 27 root lattices supported on
Gorenstein log del Pezzo surfaces of rank 1 
(and possibly on surfaces of general type).
This result was complemented in \cite{HKO}
where the case of trivial canonical class was studied.
Necessarily, in this case, the minimal desingularization 
is an Enriques surface $Y$,
and there are 31 possible root types:

\begin{Theorem}[Hwang-Keum-Ohashi]
\label{thm:31}
Let $S$ denote a Gorenstein $\Q$-homology projective plane
with trivial canonical class.
Then the singularity type of $S$ is one of the 31 following:

$A_9$, $A_8 + A_1$, $A_7 + A_2$, $A_7 + 2A_1$,  $A_6 + A_2 + A_1$,   $A_5 + A_4$, $A_5 + A_3 + A_1$, $A_5 + 2A_2$, $A_5 + A_2 + 2A_1$, $2A_4 + A_1$, $A_4 + A_3 + 2A_1$, $3A_3$,  $2A_3 + A_2+ A_1$, 
$2A_3 + 3A_1$, $A_3 + 3A_2$,
$D_9$, $D_8 + A_1$, $D_7 + 2A_1$, $D_6 + A_3$, $D_6 + A_2 + A_1$, $D_6 + 3A_1$, $D_5 + A_4$, $D_5 + A_3 + A_1$, $D_5 + D_4$,  $D_4 + A_3 + 2A_1$, $2D_4 + A_1$,
$E_8 + A_1$, $E_7 + A_2$, $E_7 + 2A_1$, $E_6 + A_3$, $E_6 + A_2 + A_1.$
\end{Theorem}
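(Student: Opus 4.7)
The plan is to translate the question into a lattice-theoretic enumeration of rank $9$ ADE root lattices embeddable in the Enriques lattice. First, since resolving rational double points does not affect $b_1$, the minimal resolution $\pi \colon Y \to S$ is a smooth minimal surface with $b_1(Y) = 0$ and $K_Y$ numerically trivial. By Enriques-Kodaira, $Y$ is therefore either K3 or Enriques; the K3 case is excluded by the Picard bound $\rho(Y) \le 20$ in characteristic zero, which cannot accommodate a root sublattice of rank $21$ (the value $b_2(\mathrm{K3}) - 1$ forced by the $\Q$-homology condition). Hence $Y$ is Enriques, $b_2(Y) = 10$, and the ADE root lattice $R$ spanned by the contracted $(-2)$-curves has rank exactly $9$ in order to yield $b_2(S) = 1$.

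The remaining question is now purely lattice-theoretic. The Enriques lattice is $\Num(Y) \cong U + E_8$, even unimodular of signature $(1,9)$. One has to enumerate the negative-definite ADE root lattices $R$ of rank $9$ which embed, not necessarily primitively, into $U + E_8$; passing first to an overlattice of $R$ within its saturation reduces this to primitive embeddings. By Nikulin's theory of discriminant forms, a primitive embedding of $R$ into $U + E_8$ exists iff the orthogonal complement, which is forced to be an even positive-definite rank-$1$ lattice $\langle h \rangle$ with $h^2 = |\disc(R)|$, carries a discriminant form anti-isomorphic to $q_R$. Existence of the embedding thus becomes a finite arithmetic check at the primes dividing $|\disc(R)|$.

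The bulk of the argument is then combinatorial. One first lists every orthogonal sum $R = \bigoplus_i R_i$ with $R_i \in \{A_n, D_k, E_l\}$ and $\sum_i \rk R_i = 9$ --- a finite task producing only a few hundred candidates. For each candidate one computes $q_R = \bigoplus_i q_{R_i}$ from the standard ADE discriminant forms and tests compatibility with some admissible $\langle h \rangle$, iterating also over the finitely many even overlattices of $R$ of the same rank. The main obstacle I would expect is organizational rather than conceptual: juggling the $2$-adic data contributed by $A_1, A_3, D_k$ factors alongside the $3$-adic data from $A_2, E_6$ factors, and avoiding double counting of isomorphic embeddings under $O(R)$ and $O(U + E_8)$. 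Once this arithmetic is carried out, the surviving rank-$9$ root types are exactly the $31$ configurations listed; the separate question of which of these are actually realized as $(-2)$-curve configurations on some Enriques surface is precisely the subject of the present paper and is not part of the statement to prove here.
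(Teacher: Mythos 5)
Your reduction to the Enriques case and your identification of the necessary lattice-theoretic condition (the contracted $(-2)$-curves span a rank $9$ root lattice $R$ embedding into $\Num(Y)\cong U+E_8$, with rank-one orthogonal complement glued via discriminant forms) are both correct, and they match what the paper records in Section \ref{s:latt} (Lemma \ref{lem:R} and the examples following it). Note, however, that the paper does not reprove Theorem \ref{thm:31} at all: it quotes it from Hwang--Keum--Ohashi \cite{HKO}, so the comparison here is with their argument as summarized in Section \ref{s:latt}.

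The genuine gap is in your final claim that the discriminant-form arithmetic leaves exactly the $31$ listed types. It does not. Concretely, $9A_1$, $4A_2+A_1$ and $D_4+5A_1$ all embed into $U+E_8$ --- each sits inside $E_8+A_1$, because $8A_1$, $4A_2$ and $D_4+4A_1$ are root sublattices of $E_8$ --- yet none of them appears in the list. The paper flags precisely this issue in the Remark after Lemma \ref{lem:R}: the embedding condition is necessary, but it is only sufficient when $R$ has at most $4$ orthogonal summands or $R=2A_3+3A_1$. To eliminate the extra candidates one needs a genuinely geometric input, namely a bound on the number of singular points (equivalently, of orthogonal summands of $R$): Hwang--Keum--Ohashi show that a Gorenstein $\Q$-homology projective plane with numerically trivial canonical class has at most $4$ singular points, with the single exception $2A_3+3A_1$. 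This step uses effectivity of the $(-2)$-classes and the geometry of the K3 double cover --- for instance, $9A_1$ would lift to $18$ disjoint $(-2)$-curves on the covering K3 surface, violating Nikulin's bound of $16$ --- together with the local invariant techniques of their earlier work on the maximal number of singular points; none of this information is visible in the discriminant form of $R$ alone. Without supplying this bound, your enumeration terminates with strictly more than $31$ types.
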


We point out that except for the root types $2A_3 + A_2+ A_1$ and $A_3 + 3A_2$,
each type was supported by an example in \cite{HKO} 
-- most of them isolated, but some also occurring in 1-dimensional families.
In parallel, the type $A_3 + 3A_2$ is supported on an isolated example in \cite[Ex. 3.8]{RS}.
Theorem \ref{thm} establishes the full classification of Gorenstein $\Q$-homology projective plane
with trivial canonical class.
Not only does it provide examples supporting the remaining type $2A_3+A_2+A_1$,
but it also confirms the natural conjecture (conf.~Lemma \ref{lem:1-dim}) 
that each type is supported on one-dimensional families
(thus correcting an erroneous claim for the type $2A_3+3A_1$ from \cite{Keum-Viet}).
We emphasize the contrast with K3 surfaces:
over $\C$, K3 surfaces can support root types of maximal rank 19,
but those surfaces only appear in one-dimensional analytic families
whose algebraic members are isolated, yet countably infinite in number (compare \cite{xiao}).

For later use, we continue to elaborate the lattice theoretic background of Theorem \ref{thm:31}.
Recall that for an Enriques surface $Y$,
the canonical divisor encodes the torsion in $\Pic(Y)$.
Quotienting out by numerical equivalence,
we obtain as the free part of $\Pic(Y)$ the following unimodular even lattice:
\begin{eqnarray}
\label{eq:Num}
\Num(Y) \cong U + E_8
\end{eqnarray}
where $U$ denotes the hyperbolic plane.
Assume that $Y$ contains a configuration of nine smooth rational curves
which corresponds to an orthogonal sum $R$ of root lattices of total rank $9$.
Then this gives an embedding
\[
R\hookrightarrow \Num(Y)
\]
with orthogonal complement generated by a single positive vector $h$.
More precisely, we find that
$h^2=d>0$ where $-d$ is the discriminant of the primitive closure $R'$ of $R$ in $\Num(Y)$:
\[
R' = (R\otimes\Q)\cap\Num(Y).
\]
Following Nikulin \cite{Nikulin}, 
$R'$ and $\Z h$ glue together to the unimodular even lattice $\Num(Y)$ from \eqref{eq:Num}
by means of an isomorphism of discriminant groups
\[
A_{R'} = (R')^\vee/R' \cong A_{\Z h} \cong \Z/d\Z,
\]
which extends to an isomorphism of the discriminant forms taking values in $\Q/2\Z$
\begin{eqnarray}
\label{eq:q}
q_{R'}\cong -q_{\Z h}.
\end{eqnarray}
In the present situation, this details as follows:

\begin{Lemma}
\label{lem:R}
The rank $9$ root lattice $R$ may arise from a configuration of smooth rational curves on an Enriques surface
only if it admits an even finite index overlattice $R'$ such that
\[
q_{R'} \cong \Z/d\Z(-1/d).
\]
\end{Lemma}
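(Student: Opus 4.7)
The plan is to realize $R$ as a configuration of smooth rational curves on a suitable Enriques surface by combining Nikulin's gluing with the surjectivity of the period map. Given the even overlattice $R'$ of $R$ satisfying $q_{R'} \cong \Z/d\Z(-1/d)$, I would first introduce the rank-one positive lattice $\Z h$ with $h^2=d$, whose discriminant form is $q_{\Z h}\cong \Z/d\Z(1/d) \cong -q_{R'}$. Nikulin's gluing lemma \cite[Prop.~1.5.1]{Nikulin} then produces an even overlattice $L$ of $R' + \Z h$ of index $d$ via the resulting anti-isometry of discriminant groups, and a rank plus discriminant count shows that $L$ is even unimodular of signature $(1,9)$. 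Since the only such lattice is $U+E_8$, this gives $L\cong \Num(Y)$ for any Enriques surface $Y$.

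Next, the surjectivity of the period map for Enriques surfaces (due to Horikawa and Namikawa) produces an Enriques surface $Y$ equipped with an isometry $\varphi\colon \Num(Y) \stackrel{\sim}{\to} L$. Pulling back the chain $R \hookrightarrow R' \hookrightarrow L$ via $\varphi^{-1}$ yields an isometric embedding $R \hookrightarrow \Num(Y)$.

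The final step --- and the main obstacle --- is to arrange that the simple roots of $R$ are actually represented by disjoint smooth rational curves on $Y$, and not merely by arbitrary $(-2)$-classes. Here I would invoke the Weyl group of $\Num(Y)$ generated by reflections in $(-2)$-classes: after composing $\varphi$ with a suitable element of this group (and, if necessary, with $-1$), we may assume the simple roots of $R$ lie in the closure of the nef cone of $Y$. Each such $(-2)$-class is then uniquely represented by an effective divisor, and the $ADE$-shape of $R$ together with the adjunction formula forces each simple root to be an irreducible smooth rational curve, with pairwise intersections prescribed by the Dynkin diagram of $R$. This geometric realization is standard in spirit for K3 and Enriques surfaces, and in the present paper will be independently confirmed by the explicit constructions carried out in Sections \ref{s:tech}--\ref{s:fam}.
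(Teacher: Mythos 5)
Your proposal proves the wrong implication. As the Remark immediately following Lemma \ref{lem:R} makes explicit, the lemma records a \emph{necessary} condition: if a configuration of smooth rational curves supporting $R$ exists on an Enriques surface $Y$, then the primitive closure $R'$ of $R$ in $\Num(Y)\cong U+E_8$ glues with the rank-one orthogonal complement $\Z h$, $h^2=d$, to the unimodular lattice, which forces $q_{R'}\cong -q_{\Z h}\cong \Z/d\Z(-1/d)$. That is the entire content of the paper's argument here; it is exactly the Nikulin gluing discussion in the paragraph preceding the lemma. Sufficiency is emphatically \emph{not} claimed at this point; it is only established a posteriori, and only for some of the root types, by the explicit constructions of Sections \ref{s:tech}--\ref{s:fam}.

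Beyond the direction of the implication, your final step is false for Enriques surfaces. You argue that after composing with a Weyl group element each simple root of $R$ is ``uniquely represented by an effective divisor'' and hence by a smooth rational curve. On a K3 surface this works because $\chi(\OO_X)=2$ makes every $(-2)$-class $\pm$effective by Riemann--Roch; on an Enriques surface $\chi(\OO_Y)=1$ and $K_Y$ is numerically trivial, so Riemann--Roch gives $\chi(D)=0$ for $D^2=-2$ and yields no effectivity whatsoever. Indeed $E_8\subset\Num(Y)=U+E_8$ is supported on $(-2)$-classes on \emph{every} Enriques surface, yet a general Enriques surface contains no smooth rational curves at all \cite{BP}. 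This is also why the period map is not the relevant tool: $\Num(Y)\cong U+E_8$ for every $Y$, so the embedding $R\hookrightarrow\Num(Y)$ exists universally, and what is actually special is the condition $2R\hookrightarrow\NS(X)$ on the K3 cover, which singles out the one-dimensional families of Lemma \ref{lem:1-dim}. So your construction cannot produce the required configuration of curves, and no soft argument of this kind can --- which is precisely why the paper resorts to the explicit base change constructions.
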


\begin{Remark}
Theorem \ref{thm} implies that the condition from Lemma \ref{lem:R} is not only necessary,
but also sufficient as soon as $R$ has at most 4 orthogonal summands or $R=2A_3+3A_1$.
\end{Remark}

Note that  the even finite index overlattices of $R$ are encoded in the discriminant form $q_R$ as well.
Hence we can retrieve all information necessary to check the condition of Lemma \ref{lem:R}
from the discriminant form $q_R$.
In \cite{HKO}, the authors decided to rather argue with local epsilon-invariants,
but for our purposes it will be more beneficial to work with discriminant forms.
Before illustrating this with two instructive examples
(which will keep on occurring throughout the paper),
we fix notation for discriminant forms of root lattices.
All of this is well-known;
for instance, it is encoded in the correction terms of the height pairing
on Mordell-Weil lattices \cite{ShMW},
but it can also be computed directly without difficulty.

\subsection{Root lattice $A_n \; (n>0)$}

We number the vertices of the corresponding Dynkin diagram $a_1,\hdots,a_n$.

\begin{figure}[ht!]
\setlength{\unitlength}{.6mm}
\begin{picture}(80,10)(0,25)
\multiput(3,32)(20,0){5}{\circle*{1.5}}
\put(3,32){\line(1,0){80}}
\put(2,25){$a_1$}
\put(22,25){$a_2$}
\put(50,25){$\hdots$}
\put(82,25){$a_n$}
%
%

\end{picture}
\end{figure}

Then each vertex $a_i$ defines a unique dual vector
\[
a_i^\vee\in A_n^\vee \;\;\; \text{ such that } \;\;\; a_i^\vee.a_j = \delta_{ij}.
\]
Here
\[
(a_i^\vee)^2 = -\frac{i(n+1-i)}{n+1} \;\;\; \text{ and } \;\;\; A_n^\vee/A_n = \langle a_1^\vee\rangle \cong \Z/(n+1)\Z
\]
with the property that $a_m^\vee \equiv m\cdot a_1^\vee \mod A_n$.

\subsection{Root lattice $D_k \; (k\geq 4)$}

We number the vertices of the corresponding Dynkin diagram $d_1,\hdots,d_k$.

\begin{figure}[ht!]
\setlength{\unitlength}{.6mm}
\begin{picture}(100,20)(5,-5)
%
\multiput(3,8)(20,0){5}{\circle*{1.5}}
\put(3,8){\line(1,0){80}}
\put(83,8){\line(2,1){17}}
\put(83,8){\line(2,-1){17}}
\put(100,16){\circle*{1.5}}
\put(100, 0){\circle*{1.5}}
\put(2,1){$d_1$}
\put(22,1){$d_2$}
\put(78,1){$d_{k-2}$}
\put(103,16){$d_{k-1}$}
\put(103, 0){$d_{k}$}


\end{picture}
\end{figure}

Then each vertex $d_i$ defines a unique dual vector
\[
d_i^\vee\in D_k^\vee \;\;\; \text{ such that } \;\;\; d_i^\vee.d_j = \delta_{ij}.
\]
Here
\[
(d_1^\vee)^2 = -1   \;\;\; \text{ and } \;\;\; (d_{k-1}^\vee)^2 = (d_k^\vee)^2 = -\frac k4
\]
with $d_1^\vee.d_{k-1}^\vee = d_1^\vee.d_{k}^\vee = -1/2$ 
and 
$d_{k-1}^\vee.d_{k}^\vee = -(k-2)/4$.
One has
\[
D_k^\vee / D_k =
\begin{cases}
\langle d_k^\vee\rangle \cong \Z/4\Z & \text{ if $k$ is odd,}\\
\langle d_1^\vee, d_k^\vee\rangle \cong (\Z/2\Z)^2 & \text{ if $k$ is even.}
\end{cases}
\]

\subsection{Root lattice $E_l \; (l=6,7,8)$}
\label{ss:E_l}

We number the vertices of the corresponding Dynkin diagram $e_1,\hdots,e_l$.

\begin{figure}[ht!]
\setlength{\unitlength}{.6mm}
\begin{picture}(120,30)(3,-40)
%
\multiput(3,-32)(20,0){7}{\circle*{1.5}}
\put(3,-32){\line(1,0){120}}
\put(2,-39){$e_2$}
\put(22,-39){$e_3$}
\put(42,-39){$e_4$}
\put(62,-39){$e_5$}
\put(43,-32){\line(0,1){20}}
\put(43,-12){\circle*{1.5}}
\put(46,-13){$e_1$}
\put(92,-39){$\hdots$}
\put(122,-39){$e_l$}

\end{picture}
\end{figure}

There is a unique dual vector
\[
e_l^\vee\in E_l^\vee \;\;\; \text{ such that } \;\;\; e_l^\vee.e_j = \delta_{lj}.
\]
Here
\[
(e_l^\vee)^2 = -\frac{10-l}{9-l} \;\;\; \text{ and } \;\;\; E_l^\vee/E_l = \langle e_l^\vee\rangle\cong \Z/(9-l)\Z.
\]

\begin{Example}[$R=A_9$]
\label{ex:1}
Take the root lattice $R=A_9$ (the first from the list in Theorem \ref{thm:31}).
Since $R$ has square-free discriminant $-10$,
there are no integral overlattices.
Hence we have to check whether 
\[
A_9 + \langle 10\rangle
\]
glue together to the even unimodular lattice $U+E_8$.
The generators of the discriminant groups have square
$(a_1^\vee)^2=-9/10$ and $1/10$.
Since these agree up to sign and a square factor (relatively prime to the order $10$),
the gluing is easily achieved.
\end{Example}

\begin{Example}[$R=A_8+A_1$]
\label{ex:2}
Take the root lattice $R=A_8+A_1$ (the second from the list in Theorem \ref{thm:31}).
The only integral overlattice is obtained by adjoining the $(-2)$-vector $a_3^\vee\in A_8^\vee$ to $A_8$.
Necessarily this leads to the unimodular lattice $E_8$.
We discuss two cases:

If $R'=E_8+A_1$, then $h^2=2$ and the gluing of $A_1$ and $\Z h$ to $U$ is immediate.

If $R=R'=A_8+A_1$, then we require $h^2=18$.
A generator of $A_R$ is given by the sum of both $a_1^\vee$'s for $A_8$ and $A_1$.
It has square $-8/9-1/2=-25/18$ which agrees up to sign and a square (invertible modulo $18$) with $(h/18)^2=1/18$.

In conclusion, according to Lemma \ref{lem:R}, both cases may a priori occur on an Enriques surfaces.
We will see in \ref{ss2:A_8+A_1} that this indeed gives rise to two distinct cases in practice.
\end{Example}

%
%
%
%

\section{Elliptic fibrations on Enriques surfaces}
\label{s:ell}

It is one of the key features of Enriques surfaces $Y$
that they always admit elliptic fibrations
\begin{eqnarray}
\label{eq:ell}
f:\; Y \to \PP^1,
\end{eqnarray}
and in fact often many.
Necessarily they come with two fibers of multiplicity two
(whose supports are usually called half-pencils)
and with some bisection
(so strictly speaking, the generic fiber is a genus one curve,
but not endowed with a rational point over $K(\PP^1)$).
There are (at least) two ways to see this.
On the one hand,
consider the universal covering $X$ which is a K3 surface:
\[
\pi: X \to Y
\]
Naturally the elliptic fibration carries over to $X$,
and it is compatible with the fixed-point free involution $\imath$
whose quotient recovers $Y$.
Here $\imath$ also acts on the base $\PP^1$ of the fibration,
and the multiple fibers of $Y$ sit over the fixed points.

Alternatively, 
consider the jacobian fibration
\[
\Jac(Y) \to \PP^1.
\]
This is a rational elliptic surface
with the same singular fibers as $Y$,
but of course with a section.
Reversely, starting from $\Jac(Y)$, one can head in two different directions.
First, the original surface $Y$
can be recovered analytically by a logarithmic transformation.
Secondly, one can apply a quadratic base change ramified in the two fixed point of $\imath$ on $\PP^1$.
This gives rise to an elliptic K3 surface with section --  the jacobian of $X$.

What makes elliptic fibrations on K3 and Enriques surfaces so useful
is their compatibility with lattice theory.
Most prominently, any non-zero divisor $D$ 
on a K3 surface $X$ of square $D^2=0$ gives rise to an elliptic fibration 
after subtracting the base locus from $D$ or $-D$, whichever is effective
(cf.~\cite{PSS}),
and any root lattice $R\subset\NS(X)$ perpendicular to $D$ is supported on fiber components 
(compare \cite{Kondo-Aut} and Prop.~\ref{prop:E}, Lem.~\ref{prop:fibers}).

On Enriques surfaces,
a similar relation persists,
but with a striking difference:
on a K3 surface $X$, for any divisor $D$ of square $D^2=-2$, either $D$ or $-D$ is effective by Riemann-Roch.
Consequently it supports a connected configuration of smooth rational curves,
and root lattices inside $\NS(X)$  always come from smooth rational curves.
On an Enriques surface $Y$, however, this is far from being true.
This can already be seen in the decomposition $\Num(Y)=U+E_8$:
clearly $E_8$ is supported on roots, i.e.~classes of square $-2$,
but a general Enriques surface contains no smooth rational curves at all \cite{BP}.
We record the following important lemma concerning divisibility among root lattices:

\begin{Lemma}
\label{lem:-2}
Let $R$ be a root lattice supported on an Enriques surfaces $Y$
with generators represented by $(-2)$-curves.
If $R\neq R'\subset\Num(Y)$,
then the additional $(-2)$-vectors are neither effective nor anti-effective.
\end{Lemma}

\begin{proof}
This is just like on a K3 surface
where divisibilities among $(-2)$-curves start at configurations such as $8A_1$ and $6A_2$,
compare \cite{Barth}, for instance.
(The only difference is that here, due to failure of Riemann-Roch, $R$ may still be divisible.)
\end{proof}

This lemma will play a central role in our classification of Enriques surfaces supporting root types of rank $9$,
and in particular for the computation of the different moduli components.
Before getting there, we emphasize an instructive example.

\begin{Example}[$R=A_8+A_1$ cont'd]
\label{ex:2-2}
For the root type $R=A_8+A_1$, 
we distinguished two cases in Example \ref{ex:2}.
If $R$ is primitive in $\Num(Y)$, i.e. $R=R'$,
then it is supported on smooth rational curves by assumption.
If $R$ is not primitive, i.e. $R'=E_8+A_1$,
then it is obtained from $R$ by adding the $(-2)$-vector $a_3^\vee$
which by Lemma \ref{lem:-2} is neither effective nor anti-effective.
%
\end{Example}


In the opposite direction, consider an Enriques surface containing a smooth rational curve $C$.
It is immediate that $\pi^*C$ splits into two disjoint smooth rational curves on $X$.
That is, by the above discussion, smooth rational curves on Enriques surfaces are duplicated on the K3 cover.
We directly obtain the following corollary
which shows that Theorem \ref{thm} is the best we can hope for
if it comes to moduli dimensions:

\begin{Lemma}
\label{lem:1-dim}
Enriques surfaces supporting a  root type $R$ of rank $9$ come in 1-dimensional families at most.
\end{Lemma}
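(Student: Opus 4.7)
The plan is to transfer the rank-$9$ root lattice $R$ from $Y$ up to its K3 cover $X$, where it produces a large sublattice of anti-invariant algebraic classes, and then to bound the moduli dimension via the period description of Enriques surfaces.

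First, I would exploit the observation recorded just before the Lemma: each of the nine $(-2)$-curves $C_i\subset Y$ supporting $R$ pulls back as $\pi^*C_i = C_i^+ + C_i^-$ to a disjoint union of two smooth rational curves on $X$ swapped by $\imath$. To the resulting nine pairs I attach the anti-invariant classes $\delta_i := C_i^+ - C_i^-\in \NS(X)^- := \NS(X)\cap H^2(X,\Z)^{-\imath}$.

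Next I would show that the $\delta_i$ span a sublattice isomorphic to $R(2)$, hence of rank $9$. The key input is that every connected component of the Dynkin diagram of $R$ is a tree (the summands $A_n$, $D_k$, $E_l$ are all simply connected), so the induced \'etale double cover of each component is trivial. Choosing the labels $C_i^{\pm}$ consistently on each component then yields $C_i^+\cdot C_j^+ = C_i\cdot C_j$ and $C_i^+\cdot C_j^- = 0$ for $i\neq j$, whence a direct computation gives $\delta_i\cdot \delta_j = 2\,C_i\cdot C_j$. Since $R$ is negative-definite, so is $R(2)$, and linear independence of the $\delta_i$ follows.

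Combining this with the standard fact that $\NS(X)^+\supseteq \pi^*\Num(Y)$ already has rank $10$ yields $\rk\NS(X)\geq 19$. The final step is period-theoretic: the Enriques period domain attached to the anti-invariant lattice $H^2(X,\Z)^{-\imath}\cong U + U(2) + E_8(2)$ is a type IV domain of complex dimension $10$, and imposing that the extra rank-$9$ sublattice $R(2)\subset \NS(X)^-$ be algebraic cuts out a subvariety of complex dimension $10-9 = 1$. Hence the coarse moduli space of Enriques surfaces carrying the root type $R$ is at most $1$-dimensional.

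The only delicate point is the Gram-matrix calculation for the $\delta_i$, which ultimately reduces to the simple connectedness of ADE Dynkin diagrams; every other ingredient is formal period-theoretic bookkeeping.
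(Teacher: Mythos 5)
Your proof is correct and follows essentially the same route as the paper: both transfer the nine $(-2)$-curves to the K3 cover via the splitting $\pi^*C_i = C_i^+ + C_i^-$, deduce $\rho(X)\geq 19$ by combining the resulting negative-definite sublattice with $\pi^*\Num(Y)$, and conclude by a period-theoretic dimension count. The only cosmetic difference is that the paper embeds the full rank-$18$ lattice $2R$ spanned by all the $C_i^\pm$ and invokes lattice-polarised K3 moduli, whereas you isolate the rank-$9$ anti-invariant sublattice $R(2)$ and count inside the $10$-dimensional Enriques period domain; the two bookkeepings are equivalent.
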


\begin{proof}
Moduli of Enriques surfaces are governed by the covering K3 surfaces.
Denote the Enriques surface with root type $R$ of rank $9$ by $Y$
and the K3 cover by $X$.
By what we have said before,
we have an embedding
\[
2R \hookrightarrow \NS(X).
\]
On the other hand, $X$ is automatically algebraic as there is another embedding
\[
U(2)+E_8(2) \cong \pi^*\Num(Y) \hookrightarrow \NS(X).
\]
Since $R^2$ is negative-definite of rank $18$ while $\Num(Y)$ is hyperbolic,
we find that $\rho(X)\geq 19$. 
It is known from the theory of lattice polarized K3 surfaces,
that such K3 surfaces come in 1-dimensional families.
\end{proof}

We are now in the position to record the properties of elliptic fibrations
on Enriques surfaces relevant for our purposes.
Beforehand, we point out another important difference to K3 surfaces
next to the subtleties involving $(-2)$-classes:
for a half-pencil $E$, only $|2E|$ will induce an elliptic fibration, but not $|E|$.

\begin{Proposition}
\label{prop:E}
Let $Y$ be an Enriques surface
and $E$ a divisor with non-zero class in $\Num(Y)$ and square $E^2=0$.
Then
\begin{enumerate}[(i)]
\item
$E$ gives rise to an elliptic fibration on $Y$;
\item
any smooth rational curve $C$ perpendicular to $E$
gives an effective or anti-effective $(-2)$-divisor supported on fiber component of the fibration;
\item
in particular, any root lattice $R\subset\Num(Y)$
which is perpendicular to $E$ and 
supported on $(-2)$-curves on $Y$,
embeds into the singular fibers of the fibration (as a lattice);
\item
if $Y$ contains a smooth rational curve $B$ with $B.E=1$,
then $E$ gives a half-pencil, and $B$ gives a bisection.
\end{enumerate}
\end{Proposition}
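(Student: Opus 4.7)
The plan is to treat parts (i)--(iv) roughly in order, with the elliptic fibration produced first and then each geometric statement read off by an intersection-theoretic argument.

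For (i), the starting point is Riemann--Roch on $Y$. Because $K_Y$ is numerically trivial and $\chi(\OO_Y)=1$, we have $\chi(E) = E^2/2 + 1 = 1$. Serre duality gives $h^2(E)=h^0(K_Y-E)$; if neither $E$ nor $K_Y-E$ were effective we would obtain $\chi(E)\le 0$, a contradiction. After replacing $E$ by $E+K_Y$ if necessary---which changes nothing in $\Num(Y)$---we may assume $E$ effective. The next step is to absorb the base locus: any $(-2)$-curve $C$ contained in the fixed part of $|E|$ satisfies $C.E\le 0$, and subtracting such components successively yields a nef representative of the same numerical class, of square $0$ and still effective. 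One then invokes the standard result (the Enriques analogue of \cite{PSS}, compatible with the K3 cover $\pi\colon X\to Y$) that a nef effective divisor of square $0$ on $Y$ induces a genus-one pencil $f\colon Y\to\PP^1$; with the primitive part $E_0$ of $E$ in $\Num(Y)$, the class of a general fiber is $mE_0$ with $m\in\{1,2\}$.

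Parts (ii) and (iii) are then direct consequences. If $C$ is a smooth rational curve with $C.E=0$, then $C.F=0$ for a general fiber $F\sim mE_0$, so $C$ is contracted by $f$ and must be contained in a fiber; being a $(-2)$-curve, it is a fiber component. Applying this to each of the finitely many smooth rational curves generating $R$ gives (iii), and the relations prescribed by the root lattice structure force these components to sit inside the singular fibers (irreducible fibers would be smooth elliptic or nodal/cuspidal rational).

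For (iv), assume $B$ is a smooth rational curve with $B.E=1$. Then $B.F = m$, so $B$ is either a section ($m=1$) or a bisection ($m=2$). The key observation is that an elliptic fibration on an Enriques surface admits no section: a section $s$ would intersect every fiber---including the multiple fibers $2E_1,2E_2$ coming from the half-pencils---in a single point, forcing $s.E_i=1/2$, which is impossible. Hence $m=2$, the class $E=E_0$ is primitive and represents a half-pencil, and $B$ is a bisection.

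The only genuinely delicate point is the reduction in (i) from an arbitrary numerical class of square zero to a nef effective representative; once the fibration is in hand, (ii)--(iv) are formal consequences of intersecting with the fiber class, the one subtlety being the half-pencil/section dichotomy in (iv), which is settled by the standard non-existence of sections on Enriques elliptic fibrations.
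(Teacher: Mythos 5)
Your overall architecture matches the paper's (Riemann--Roch for effectivity, removal of the base locus to obtain a genus-one pencil, then reading off (ii)--(iv) by intersecting with the fibre class), but there is a genuine gap at the pivotal step, and it propagates into everything after (i). The claim that subtracting the fixed components of $|E|$ (equivalently of $|2E|$) ``yields a nef representative of the same numerical class, of square $0$'' is false: the passage from an effective isotropic class to a nef one is effected by reflections $E\mapsto E+(E.C)\,C$ in $(-2)$-curves $C$ with $E.C<0$, and the resulting class $w(E)$, for $w$ in the Weyl group generated by these reflections, is in general a \emph{different} element of $\Num(Y)$ (e.g.\ $E=E'+C$ with $E'$ a half-pencil and $E'.C=1$ is effective and isotropic but not nef, and its nef model is $E'\neq E$ in $\Num(Y)$). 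Consequently the general fibre is numerically $w(E)$, or $2w(E)$ for the half-pencil, and \emph{not} $mE_0$; so your deductions ``$C.E=0\Rightarrow C.F=0$'' in (ii)--(iii) and ``$B.F=m$'' in (iv) do not follow as stated. This is precisely the point on which the paper's proof spends its effort: the reflections do not act effectively on $\Num(Y)$ in general, but they do act effectively \emph{up to sign} on classes of $(-2)$-curves (because such curves split on the K3 cover, where a reflection on $Y$ lifts to a composition of reflections), which allows one to transport $C$, the root lattice $R$, and the bisection $B$ along $w$ and verify that their images are again $(-2)$-curves with the required intersection numbers against the nef class $w(E)$. Without an argument of this kind your proof only covers the case where $E$ happens to be nef from the start.

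Two smaller points. First, the dichotomy coming from $\chi(E)=1$ is that $E$ or $K_Y-E$ is effective, i.e.\ up to torsion either $E$ or $-E$ is effective; replacing $E$ by $E+K_Y$ does not achieve the reduction to ``$E$ effective'', and the sign is not cosmetic: in (iv) the paper must treat the case where $-E$ is effective separately, since then $B$ lies in the support of $-E$ and one first reflects in $B$ to produce the effective isotropic class $E'=-E-B$ with $B.E'=1$ before proceeding. Second, your observation that an elliptic fibration on an Enriques surface admits no section (so that $m=2$ and $B$ is a bisection of the half-pencil) is correct and consistent with the paper; that portion of (iv) is fine once the fibre class has been correctly identified.
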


\begin{proof}
\textit{(i)}
By Riemann-Roch, either $E$ or $-E$ is effective,
say w.l.o.g.~$E$.
If its class in $\Num(Y)$ is not primitive,
then divide by the degree of primitivity
and continue with the primitive class thus obtained.
By \cite[(17.4)]{BHPV},
it remains to subtract the base locus of $|2E|$;
this is given by a configuration of  $(-2)$-curves.
Hence it suffices to apply successive reflections in these curves
to arrive at a half-pencil.

\textit{(ii), (iii)}
We argue with the procedure sketched above to derive a half-pencil from $E$.
The only critical step consists in the reflections
since these generally do not act effectively on $\Num(Y)$.
On the classes of $(-2)$-curves, however, they do act effectively up to sign.
This follows from the fact that the $(-2)$-curves split on the K3 cover $X$,
and likewise that a reflection on $Y$ corresponds to a composition of reflections on $X$
(compare the discussion in \cite[\S 2.3, 2.4]{RS}).
Thus we can assume that the smooth rational curve $C$, or the root lattice $R$ supported on $(-2)$-curves on $Y$,
are mapped to (anti-)effective $(-2)$-divisors 
perpendicular to the half-pencil $E$.
That is, their classes embed into $E^\perp$ 
(and into $E^\perp/\Z E$),
and we conclude as in \cite[Lem. 2.2]{Kondo-Aut}
that the $(-2)$-divisors are supported on fiber components
(but unlike loc. cit. we do not claim that they determine the singular fibers,
since we have no assumption such as $E^\perp/\Z E=R$, see Lemma \ref{prop:fibers}).

\textit{(iv)}
By inspection of the intersection number, $E$ has primitive class in $\Num(Y)$.
We continue as in \textit{(i)}, using the special action of the reflections on $(-2)$-curves
as laid out in \textit{(ii), (iii)}.
If $E$ is effective, then  the reflections map $B$ to an effective $(-2)$-divisor $D$
such that $D.E'=1$.
Since $E'$ is nef
(as is any fiber in a fibration by the moving lemma),
and $D$ is supported on $(-2)$-curves,
we infer that the support of $D$ contains a smooth rational bisection $B'$
with $B'.E'=1$
plus possibly some configuration of $(-2)$-curves.
Since the latter are orthogonal to the half-pencil $E'$ by construction, they have to be fiber components,
and successive reflections map $B$ to $B'$ while not affecting $E'$.

If $-E$ is effective, then $B$ is contained in the support of $-E$.
The reflection in $B$ gives an effective isotropic class $E'=-E-B$
such that $B.E'=1$. Now continue as before to complete the proof of Proposition \ref{prop:E}.
\end{proof}

\begin{Remark}
\label{rem:curves?}
In the generality of Proposition \ref{prop:E},
we cannot claim that the $(-2)$-curves in \textit{(ii)} and \textit{(iii)} correspond to proper fiber components
(a subtlety pointed out by a referee of \cite{RS}).
However, in all the situations relevant to our concerns,
this will in fact crucially hold true as we shall see along the way of proving Theorem \ref{thm} (see Proposition \ref{prop:curves}).
\end{Remark}

We emphasize the most instructive case of Proposition \ref{prop:E}:
when $E$ itself happens to be a divisor of Kodaira type,
then it naturally appears as a singular fiber of the induced fibration (potentially multiple, for instance in case \textit{(iv)}).
For later use, we continue to elaborate on the relation between the root lattice $R$ in Proposition \ref{prop:E}  \textit{(iii)}
and the underlying singular fibers.
To this end, we recall that the fibers correspond (non-uniquely) to the extended Dynkin diagrams $\tilde A_n, \tilde D_k, \tilde E_l$.

\begin{Lemma}
\label{prop:fibers}
Let $E$ be a non-trivial isotropic vector in $\Num(Y)$
and $R$ a root lattice supported on $(-2)$-curves on $Y$.
Let $R_0= E^\perp \cap R$ decompose into irreducible root lattices
\[
R_0 = \sum_v R_v.
\]
Then the following are equivalent:
\begin{enumerate}
\item
 the singular fibers of the elliptic fibration induced by $E$ 
support a root lattice strictly greater than $R_0$;
\item
the reducible fibers are not in bijective correspondence to the extended Dynkin diagrams $\tilde R_v$;
\item
$R_0$ is a proper sublattice of the span of the $(-2)${-curves} inside $E^\perp/\Z E$. 
\end{enumerate}
\end{Lemma}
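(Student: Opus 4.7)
The plan is to translate all three conditions into one and the same statement, namely the strict inclusion $R_0\subsetneq R_{\mathrm{fib}}$, where $R_{\mathrm{fib}}$ denotes the root lattice spanned inside $E^\perp/\Z E$ by the components of the reducible fibers of the elliptic fibration $f\colon Y\to\PP^1$ induced by $E$ via Proposition \ref{prop:E}. Write the fiber class as $F$, a positive integer multiple of $E$; the decisive geometric input is that any $(-2)$-curve $C$ with $C\cdot E=0$ also satisfies $C\cdot F=0$ and is therefore contained in a fiber, i.e.\ it is a fiber component. In particular, every generator of $R_0=R\cap E^\perp$ is a fiber component; and because components of distinct fibers are disjoint while the Dynkin diagram of each irreducible summand $R_v$ is connected, each $R_v$ is supported on components of a single reducible fiber $F_{i(v)}$.

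Next I would identify $R_{\mathrm{fib}}$ concretely. For a reducible fiber $F_i$ of Kodaira type $\tilde T_i$, its components are $(-2)$-curves which satisfy a single linear relation (the fiber $F_i$ being proportional to $E$); hence their images in $E^\perp/\Z E$ generate the irreducible, non-extended root lattice $T_i$. Summing over reducible fibers yields $R_{\mathrm{fib}}=\bigoplus_i T_i$, which is precisely the span of the $(-2)$-curves inside $E^\perp/\Z E$. By the previous paragraph $R_0=\sum_v R_v\subseteq R_{\mathrm{fib}}$, and each $R_v$ embeds into $T_{i(v)}$.

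With this dictionary, the three equivalences are essentially formal. Condition $(3)$ is literally $R_0\subsetneq R_{\mathrm{fib}}$, and condition $(1)$ is the same inequality. For $(1)\Leftrightarrow(2)$, a bijective correspondence of reducible fibers with the extended diagrams $\tilde R_v$ means exactly that $v\mapsto i(v)$ is a bijection and $R_v=T_{i(v)}$ for each $v$, which is the equality $R_0=R_{\mathrm{fib}}$. Conversely, equality forces surjectivity of $v\mapsto i(v)$ (otherwise an extra fiber contributes a summand to $R_{\mathrm{fib}}$ absent from $R_0$), saturation $R_v=T_{i(v)}$ (otherwise $T_{i(v)}\supsetneq R_v$), and injectivity (two disjoint $R_v,R_w$ cannot both lie in the same irreducible $T_i$).

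The main obstacle is the bookkeeping in the last step: one has to check that each of the three ways in which the bijective correspondence of $(2)$ can fail—an extra reducible fiber, an enlarged reducible fiber, or two irreducible summands of $R_0$ sitting in the same reducible fiber—indeed forces a strict inclusion $R_0\subsetneq R_{\mathrm{fib}}$, and that conversely no strict inclusion can persist once the bijection holds. Apart from this combinatorial check and the standard identification of $(-2)$-curves perpendicular to $E$ with fiber components, the proof is a direct unpacking of the definitions.
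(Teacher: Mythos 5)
Your proposal is correct and follows essentially the same route as the paper: it identifies the span of the $(-2)$-curves perpendicular to $E$ inside $E^\perp/\Z E$ with the orthogonal sum of the (non-extended) root lattices of the reducible fibers, and reduces all three conditions to the single strict inclusion $R_0\subsetneq R_{\mathrm{fib}}$ — exactly the content of the paper's appeal to Kond\=o's Lemma 2.2 and Kodaira's classification, with the combinatorial bookkeeping written out. The only caveat is that your phrase ``$C\cdot E=0$ implies $C\cdot F=0$, hence $C$ is a fiber component'' silently absorbs the reflection argument of Proposition \ref{prop:E}\,(ii)--(iii) (the class $E$ need not itself be nef, so $C$ becomes a fiber component only after applying the relevant Weyl group element), but since you invoke that proposition this is consistent with the paper's own level of detail.
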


\begin{proof}
The equivalence of \textit{(1)} and \textit{(2)} follows from Kodaira's classification:
$\tilde R_v$ gives the singular fiber $F_v$ if and only if in $R_v$ a simple fiber component is omitted (cf.~Rem.~\ref{rem:unique}).
Otherwise $R_v$ will have finite index in the root lattice corresponding to the fiber $F_v$.
The same applies to a sum of irreducible root lattices embedding into a single fiber.

\textit{(3)} is exactly how Kond\=o's argument in \cite[Lem. 2.2]{Kondo-Aut} has to be rephrased
if  $E^\perp/\Z E$ is not a priori a root lattice (both on Enriques and K3 surfaces):
the singular fibers are encoded in the root lattice inside $E^\perp/\Z E$
generated by  $(-2)$-curves on $Y$ perpendicular to $E$.
Hence the equivalence with \textit{(1), (2)} follows.
\end{proof} 

%
%
%
%
%
%

In the next section,
we start translating the lattice-theoretic information from this section
into explicit equations of the elliptic fibrations.
For this purpose, it will be instrumental to 
determine how the bisection intersects the singular fibers.
The subtleties in this respect have two facets:
$R_0$ may not determine the singular fibers (as in Lemma \ref{prop:fibers}),
and there may be multiple fibers.
On the other hand, Lemma \ref{lem:-2} provides a severe restriction on the possible configurations.
We illustrate this with an easy example continuing a previous thread:

\begin{Example}
\label{ex:ae8}
We have already seen in Example \ref{ex:2} that the root lattice $A_8$ embeds into $E_8$.
However, the divisible class necessarily is effective or anti-effective,
so in the set-up of Proposition \ref{prop:E} \textit{(iii)} this would not be compatible with Lemma \ref{lem:-2}.

In contrast, the corresponding fiber $\tilde E_8$ admits an embedding of $A_8$, even as $(-2)$-curves labelled 
$e_2,\hdots,e_9$, which does not cause any divisibility (the $3$-divisibility arises only mod $E$!).
Note that these considerations do not involve a smooth bisection yet
which, in our cases, we will play off soon against the restrictions posed by Lemma \ref{lem:-2}.
\end{Example}


For later reference, we sum up how we will piece the given information together,
using all the restrictions and findings above:

\begin{Observation}
\label{lem:bi}
Let $Y$ be an Enriques surface with root type $R$ of rank $9$ 
supported on $(-2)$-curves.
Assume that there is an isotropic class $E\in\Num(Y)$
such that $R_0=E^\perp\cap R$ is a root lattice of rank $8$.
In addition, assume that the support of the root lattice $R$ 
also contains the class of an smooth rational curve $B$ 
such that $B.E=1$.
Then the  bisection $B'$ of the elliptic fibration arising from $B$ by Proposition \ref{prop:E}
meets the $(-2)$-divisors resulting from $R_0$ in a way exactly predicted by $R$,
and usually this can be translated into the intersection pattern with the fiber components.
\end{Observation}

The reader may wonder about the phrase 'usually' above,
and indeed this will hold true in all cases under consideration here (see Proposition \ref{prop:curves}),
but proving the statement in full generality would go beyond the scope of this paper
(compare Remark \ref{rem:curves?}).
Here we content ourselves by  illustrating this by an instructive example.


\begin{Example}[$R=A_8+A_1$ again]
\label{ex:2-3}
Take $R=A_8+A_1$.
Following up on Example \ref{ex:2-2},
consider the imprimitive case where $R'=E_8+A_1$.
The gluing from Example \ref{ex:2}
gives rise to the isotropic vector $E=a_1^\vee+h/2$ (for $a_1^\vee\in A_1^\vee$).
By Proposition \ref{prop:E} the $(-2)$-curve $B$ generating the $A_1$-summand induces a bisection $B'=\sigma (B)$
of the  elliptic fibration $|2 \sigma (E)|$ for a suitable composition of reflections $\sigma$.
We have $R_0=A_8$,
admitting embeddings into singular fibers of types $\tilde A_8$ and $\tilde E_8$.
In either case, it is clear that the support of $\sigma (R_0)$ does not contain the fiber components met by $B'$.
For rank reasons, $B'$ thus meets a single component,
and $\sigma (R_0)$ embeds into its complement, i.e.~into $A_8$ resp.~$E_8$.
In the former case, the general theory of root lattices guarantees
that there are further reflections taking $\sigma (R_0)$ to the canonical generators of $A_8$;
i.e.~all $(-2)$-curves forming $R$ are mapped to $(-2)$-curves again (as announced in Remark \ref{rem:curves?},
see Proposition \ref{prop:curves}).
In the latter case, with 
\[
A_8\cong \sigma (R_0)\hookrightarrow E_8,
\]
the $(-2)$-vector $\sigma (a_3^\vee)$ necessarily becomes effective or anti-effective.
Thus the same holds for $a_3^\vee$, contradicting Lemma \ref{lem:-2}.
Hence this case cannot persist,
and the reducible fiber can only have type $\tilde A_8$ (under the assumption that $R$ is not primitive).
%
\end{Example}

We shall now turn to the task of extracting explicit parametrizations from the  data provided by the root types.
Before doing so, let us highlight that these subtle relations between $(-2)$-classes and curves,
and between singular fibers and root lattices
are the main reason why it is  non-trivial to decide
whether a root lattice $R$ from Theorem \ref{thm:31} is supported on an Enriques surface
-- or phrased positively, why $R$ may even admit several moduli components.


%

\section{Enriques involutions of base change type}
\label{s:tech}

This section introduces the main theoretical ingredient
for the proof of Theorem \ref{thm}.
We review a construction of Enriques involutions from \cite{HS}
based on quadratic twists
which can be controlled very well and is nicely compatible with 
the theories of lattice polarized K3 surfaces
and Mordell-Weil lattices.

The overall set-up is as follows:
assume that an elliptic fibration \eqref{eq:ell} on an Enriques surface $Y$
admits a bisection $C$
which splits on the K3 cover $X$ into two smooth rational curves
(not necessarily disjoint).
The induced elliptic fibration on $X$ is thus equipped with two sections
one of which, say $O$, we choose as zero for the group law on the fibers.
In particular,
$X=\Jac(X)$ and we have a commutative diagram consisting of elliptic fibrations
and $2:1$ maps:
$$
\begin{array}{ccccl}
&& X &&\\
& \swarrow &  \downarrow & \searrow &\\
Y && \PP^1 && S=\Jac(Y)\\
\downarrow & \swarrow && \searrow & \downarrow\\
\PP^1 & \multicolumn{3}{c} = & \PP^1
\end{array}
$$
Let $\jmath$ denote the deck transformation corresponding to the double covering $X\to S$.
Then one can show that the other section mapping to $C$,
say $P$, is anti-invariant in $\MW(X)$ for $\jmath^*$,
and that the Enriques involution $\imath$ equals $\jmath$ composed with translation by $P$
\cite[Prop.~3.1]{HS}.

The crucial point of this construction is that 
the shape of $P$ can be predicted precisely,
so that often $P$ (and everything else) can be computed explicitly.
Notably, $P$ is invariant for the composition of $\jmath$ and $-1$ (acting fiberwise),
hence it descends to the quotient of $X$ by $\jmath\circ(-1)$.
The latter automorphism of $X$ is a Nikulin involution with four fixed points  in each fiber fixed by $\jmath$,
so the minimal desingularization 
\[
S'\to X/\langle\jmath\circ(-1)\rangle
\]
 is again K3.
In fact, this is exactly the quadratic twist of $S$ at the two fixed point of $\jmath$ (or $\imath$)
acting on $\PP^1$.
Explicitly,
if $S$ is given in (extended) Weierstrass form
\begin{eqnarray}
\label{eq:S}
S:\;\;\; y^2 = f(x),\;\;\; f\in K(t)[x],\; \;\; \deg f=3,
\end{eqnarray}
then the quadratic twist at the zeroes of a quadratic polynomial $q\in K[t]$ (possibly linear, i.e.~including a zero at $\infty$)
is given by
\begin{eqnarray}
\label{eq:S'}
S': \;\;\; q(t)y^2 = f(x).
\end{eqnarray}
Here both $S$ and $S'$ pull-back to $X$ under the degree two morphism of $\PP^1$ ramified at the roots of $q(t)$,
and the section $P$ is induced by a section $P'\in\MW(S')$.
Basically, the only condition for the composition of $\jmath$ and translation by $P$
to be fixed point-free is that $P'$ meets both ramified fibers on $S'$
in a component far away from the identity component (the one met by the zero section
which is chosen compatibly with $O$;
if the fiber has type $I_n^*$ with $n>0$,
i.e.~when the ramified fiber on $Y$ is not smooth,
then this has to be one of the two far simple components).

While the above technique is extremely useful
for explicit computations and constructions,
it is in fact not so easy to detect on the level of Enriques surfaces
when it applies.
That is, of course, unless the bisection $C$ itself is smooth rational
because then it splits automatically on $X$ as discussed in Section \ref{s:ell}.

\begin{Lemma}
\label{lem:bct}
Let $Y$ be an Enriques surface with given elliptic fibration admitting a smooth rational bisection.
Then $Y$ arises from an Enriques involution of base change type
originating from $\Jac(Y)$.
\end{Lemma}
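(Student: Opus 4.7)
The plan is to verify directly that a smooth rational bisection supplies all the data needed by the base change construction of Section \ref{s:tech}; the conclusion will then follow by \cite[Prop.~3.1]{HS}. The starting observation is that a smooth rational curve on $Y$ is duplicated on the K3 cover $X$: because $C\cong\PP^1$ is simply connected, any étale double cover of $C$ must split, and so $\pi^{-1}(C)$ is disconnected. Hence $\pi^*C = C_1 + C_2$ decomposes into two disjoint smooth rational curves, each mapping isomorphically onto $C$.

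Next I would check that $C_1$ and $C_2$ are sections of the induced elliptic fibration on $X$, whose base is the degree two cover of $\PP^1$ ramified at the two fixed points of the Enriques involution $\imath$ on the base of $f$. Writing $F$ for a generic fiber of $f$, one has $\pi^*F=\tilde F_1+\tilde F_2$ for two generic fibers on $X$ interchanged by $\imath$, which also swaps $C_1$ and $C_2$. Combining these symmetries with
\[
(\pi^*C)\cdot(\pi^*F) \;=\; 2\,(C\cdot F) \;=\; 4,
\]
one concludes that each $C_i$ meets each $\tilde F_j$ in exactly one point, so each $C_i$ is a section of the K3 fibration.

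Declaring $O:=C_1$ to be the zero section and $P:=C_2\in\MW(X)$ the other, we are precisely in the situation described at the opening of Section \ref{s:tech}: $X$ carries an elliptic fibration with two sections $O$ and $P$ that both descend to the bisection $C\subset Y$. Applying \cite[Prop.~3.1]{HS} then yields $\jmath^*P=-P$ and $\imath = t_P\circ\jmath$, where $\jmath$ is the deck involution of $X\to\Jac(Y)$, so $Y$ arises from the base change construction originating from $\Jac(Y)$. The only genuinely delicate point is the sectional check in the second step --- namely, pinning down that $C_i$ meets each fiber of the K3 fibration exactly once, rather than being a multisection --- while the remaining content is essentially a tautology packaging the framework of Section \ref{s:tech} into a readily verifiable criterion.
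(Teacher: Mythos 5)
Your argument is correct and follows essentially the same route the paper takes: the paper treats the lemma as immediate because a smooth rational bisection "splits automatically on $X$" (Section \ref{s:ell}) into two disjoint $(-2)$-curves which serve as the sections $O,P$ of the induced fibration, after which \cite[Prop.~3.1]{HS} applies exactly as you invoke it. Your write-up merely makes explicit the sectional check that the paper leaves implicit.
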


This basic case
which originally goes back to Kond\=o \cite{Kondo-Enriques}
before being generalized in \cite{HS}
will be instrumental for our paper,
so we continue to elaborate on the details.

Assuming that the bisection $C$ is smooth rational as in Lemma \ref{lem:bct} (i.e. $C^2=-2$),
it splits on $X$ into orthogonal sections $O, P$.
In other terminology, $P$ is an \emph{integral} section (relative to the neutral element $O$ in $\MW(X)$).
One can easily translate this into the shape of $P'$.
To this end, we make sure that the extended Weierstrass forms involved are minimal.
In terms of \eqref{eq:S}, \eqref{eq:S'},
one can assume after some variable transformations that
\[
f = x^3 + a_2 x^2 + a_4 x + a_6, \;\;\; a_i\in K[t].
\]
Then the minimality of \eqref{eq:S} is equivalent to the 
condition that 
\[
\deg a_i \leq i \;\;\; (i=2,4,6), 
\]
but at the same time
the discriminant of $f$ should not be a perfect twelfth power in $K[t]$.
In consequence, the integral sections of $S$ 
(which always generate the Mordell-Weil group by \cite{OS})
are given by pairs of polynomials
$(U,V)$ in $t$ such that $\deg U\leq 2, \deg V\leq 3$.
In what follows,
we will abbreviate these polynomials (or in general rational functions)
by $x(P), y(P)$.

Accordingly, on $S'$ we shall concentrate on integral sections $P'$
meeting both ramified fibers in non-identity components.
In terms of \eqref{eq:S'},
these conditions translate as
\begin{eqnarray}
\label{eq:x(P')}
x(P'), y(P') \in K[t] \;\;\; \text{ with } \;\;\; 
\deg x(P'), \deg y(P') \leq 2.
\end{eqnarray}
In practice,
given $S$ and $q$,
this approach  leads to a system
of polynomial equations
which often can be solved directly or with a computer algebra system.
It turns out to be even more beneficial 
for computing families of Enriques surfaces and their covering K3 surfaces
as then we allow $q$ to vary as well.
An example in this spirit was given for the generic nodal Enriques surface in \cite[3.8]{HS}.
Here we illustrate this with a 1-dimensional family geared towards our aims.

\begin{Example}[$R=A_9$ cont'd]
\label{ex:1-2}
Consider one of the (so-called extremal) rational elliptic surfaces with finite Mordell-Weil group $\MW\cong\Z/4\Z$:
\[
X_{8211}: \;\;\; y^2 = x(x^2+(t^2+2)x+1).
\]
This has singular fibers of Kodaira types $I_8$ at $\infty, I_2$ at $0$ and $I_1$ at the roots of $t^2+4$.
There are 4-torsion sections $(-1,\pm t)$ and a 2-torsion section $(0,0)$.
For any quadratic twist, the 2-torsion section is simultaneously invariant and anti-invariant  for the deck transformation $\jmath^*$,
so it leads to an Enriques involution if and only if there is no ramification at the $I_2$ fiber.
This leads to a 2-dimensional family of Enriques surfaces with root types, e.g., $E_7+A_1$ and $A_7+A_1$,
but by Lemma \ref{lem:1-dim}, it cannot  support as a whole any of the types from Theorem \ref{thm:31}.

In order to support Enriques surfaces of root type $A_9$,
we arrange for the quadratic twist at $q$
to admit a section $P'$ meeting both $I_8$ and $I_2$ fibers at non-identity components
adjacent to the identity component.
This translates as 
\[
x(P')|_{t=0} = -1,\;\;\; x(P')|_{t=\infty} = 0
\]
(where the last equality should be understood as a simple vanishing).
By \eqref{eq:x(P')}, this directly leads to
\[
x(P') = \mu t -1
\]
which for $\mu\neq 0$ lives exactly on the quadratic twist $S'$ at 
\[
q= (\mu t-1) (\mu t +\mu^2-1).
\]
The base change type technique thus gives a one-dimensional family of Enriques surfaces
(properly parametrized by $\sqrt \mu$ after eliminating symmetries)
with reducible singular fibers of Kodaira types $I_8, I_2$ and a smooth rational bisection $C$
(the common image of $O$ and $P$).
The Enriques surfaces (for $\mu\neq 0$) feature the following diagram of $(-2)$-curves
which obviously support the root type $A_9$:

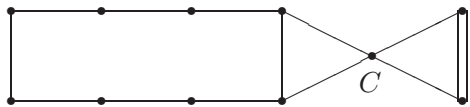
\begin{figure}[ht!]
\setlength{\unitlength}{.6mm}
\begin{picture}(100,20)(0,0)
%
\multiput(0,0)(20,0){4}{\circle*{1.5}}
\put(0,0){\line(1,0){60}}

\multiput(0,20)(20,0){4}{\circle*{1.5}}
\put(0,20){\line(1,0){60}}

\put(0,20){\line(0,-1){20}}
\put(60,20){\line(0,-1){20}}

\put(60,20){\line(2,-1){40}}
\put(60,0){\line(2,1){40}}

\put(99,20){\line(0,-1){20}}
\put(101,20){\line(0,-1){20}}

\put(80,10){\circle*{1.5}}
\put(77,2){$C$}

\put(100,20){\circle*{1.5}}
\put(100,0){\circle*{1.5}}


\end{picture}
\caption{$(-2)$-curves supporting the root type $A_9$}
\label{fig:A_9}
\end{figure}
\end{Example}

In the next two sections we shall see how all Enriques surfaces
supporting root types of rank $9$ (thus from Theorem \ref{thm:31})
arise naturally through the base change construction,
and in fact from very special rational elliptic surfaces (see Proposition \ref{prop:base}).

\section{Extremal rational elliptic surfaces}
\label{s:RES}

The aim of the next two sections is to prove a  classification result which
will be crucial for the proof of Theorem \ref{thm}.
In essence, we will narrow down the rational elliptic surfaces
required to study
Enriques surfaces
supporting root types of rank $9$
to the following very special class:

\begin{Definition}
\label{def}
A rational elliptic surface $S\to \PP^1$ is called \emph{extremal}
if its Mordell-Weil group $\MW(S)$ is finite.
\end{Definition}

Note that we have already seen an example of an extremal rational elliptic surface
in Example \ref{ex:1-2}.
Extremality translates into singular fibers and lattices as follows:

\begin{Criterion}
\label{crit}
A rational elliptic surface is extremal if and only if 
the singular fibers support  root lattices of total rank $8$.
\end{Criterion}

\begin{proof}
Following Kodaira's classification \cite{K},
the reducible fibers are associated with root lattices $R_v$
of rank one less than the number of fiber components
(compare Lemma \ref{prop:fibers}).
Writing $R=\oplus_v R_v$,
the Shioda-Tate formula reads
\begin{eqnarray}
\label{eq:ST}
\rho(S) = 2 + \rk R + \rk \MW(S).
\end{eqnarray}
Recall that any rational elliptic surface $S$ is isomorphic to $\PP^2(9)$,
the projective plane blown up in the base points of a cubic pencil.
In particular, $\rho(S)=10$,
and finite $\MW$ is equivalent to $\rk R = 8$ by \eqref{eq:ST} as claimed.
\end{proof}

\begin{Remark}
\label{rem:unique}
In associating a root lattice $R_v$ to the singular fiber at $v$,
one usually omits the fiber component met by the zero section.
This choice is particularly compatible with the theory of Mordell-Weil lattices \cite{ShMW},
but for additive fibers, it is by no means unique
as we have seen in Section \ref{s:ell}, especially in Lemma \ref{prop:fibers} and Example \ref{ex:ae8}.
For instance, omitting a component of an $I_0^*$ fiber, we either obtain $D_4$ or $4A_1$.
\end{Remark}


 We point out that there are only 16 extremal rational elliptic surfaces,
15 of which isolated 
while one occurs in a one-dimensional continuous family (varying with the j-invariant).
They were classified by Miranda-Persson in \cite{MP}.
We will give all the relevant equations and properties in Table \ref{T0} below,
but in a different form geared towards our purposes.
%
 Namely the extended Weierstrass forms 
 are meant to make visible 
 the singular fibers as much as possible (often obtained from Tate's (or Kubert's) normal form for elliptic curves
 with a rational point of given order, compare also Beauville's representations \cite{Beauville}).
 In particular, this will ease the process of
 endowing the quadratic twists with sections
 meeting the reducible fibers in a prescribed way
 (following Tate's algorithm \cite{Tate}).

 \begin{table}[ht!]
 $$
 \begin{array}{|cccc|}
 \hline
 \text{notation} & \text{Weierstrass eqn. } y^2 = f(x) & \text{sing. fibers} & \MW\\
 \hline
 X_{9111} & x^3+(tx+1)^2/4
 & I_9/\infty, I_1/t^3-27
 & \Z/3\Z\\
 X_{8211} & x(x^2+(t^2+2)x+1) & I_8/\infty, I_2/0, I_1/t^2+4 & \Z/4\Z\\
 X_{6321} & x(x^2+(t^2/4+t-2)x+1-t) & 
 I_6/\infty, I_3/0, I_2/1, I_1/-8 &
 \Z/6\Z\\
 X_{5511} & x^3-tx^2+((1-t)x-t)^2/4 & I_5/0,\infty, I_1/t^2-11t-1 & \Z/5\Z\\
 X_{4422} & x(x-1)(x-t^2) & I_4/0,\infty, I_2/\pm 1 & \Z/4\Z\times\Z/2\Z\\
 X_{3333} &
 x^3 + (3tx+t^3-1)^2/4
 & I_3/\infty, t^3-1 & (\Z/3\Z)^2\\
  X_{321} & x(x^2+x+t) & III^*/\infty, I_2/0, I_1/\frac14 & \Z/2\Z\\
 X_{222} & x(x-1)(x-t) & I_2^*/\infty, I_2/0,1 & (\Z/2\Z)^2\\
 X_{141} & x(x^2+t(t+2)x+t^2) & I_4/\infty, I_1^*/0, I_1/-4 & \Z/4\Z\\
 X_{\lambda} & x(x^2+tx+\lambda t^2) & I_0^*/0,\infty & (\Z/2\Z)^2\\
 \hline
 \end{array}
 $$
 \caption{Extremal rational elliptic surfaces}
 \label{T0}
 \end{table}

 Note that the
 $2$-torsion sections are always located at zeros of $f(x)$;
 here we made sure to always normalize one of them, if any over $K(t)$,  to $(0,0)$.
 
 \begin{Remark}
 \label{rem}
 The attentive reader will notice that the isotrivial extremal rational elliptic surface
 $X_{33}$
 (in the notation of \cite{MP})
 does not appear in the above table,
 although it is not distinguished from $X_{321}$
 by the root lattices associated to the singular fibers
 (see the two cases  appearing in \ref{ss:A_9}).
 The reason for its absence 
 lies in the fact that the quadratic base changes of the latter have indeed two moduli,
 while the former only yield one parameter (since there is still a scaling in the base variable),
 and in fact they form a subfamily of the
 moduli theoretic closure of the latter (cf. the analogous discussion for $X_{431}$ and $X_{44}$ in \cite[\S 3.3]{RS}).
 \end{Remark}

\section{Gluing Technique}
\label{s:glue}

We are now in the position to state the key result for the construction 
of Enriques surfaces with maximal root type.

\begin{Proposition}
\label{prop:base}
Any Enriques surface supporting a root type of rank $9$
arises from an extremal rational elliptic surface 
by the base change type construction.
\end{Proposition}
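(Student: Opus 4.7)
The plan is to reduce the statement to Lemma \ref{lem:bct} by producing, on each Enriques surface $Y$ with root type $R$ of rank nine, an elliptic fibration with a smooth rational bisection and with reducible singular fibers of total rank eight. Since $\Jac(Y)$ inherits the singular fibers of $Y$, Criterion \ref{crit} would then immediately force $\Jac(Y)$ to be extremal, and Lemma \ref{lem:bct} would deliver the base-change-type construction. The problem thus reduces to exhibiting an isotropic class $E\in\Num(Y)$, a rank-eight root sublattice $R_0\subset R$ perpendicular to $E$, and a $(-2)$-curve $B$ on $Y$ with $B\cdot E=1$. We may assume $R$ nodally primitive, passing to its nodal primitive closure otherwise.

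For the construction of $E$, $R_0$ and $B$, I would exploit the lattice framework of Section \ref{s:latt}: $\Num(Y)\cong U+E_8$, with the orthogonal complement of $R$ generated by a single vector $h$ of positive square $d$ glued to $R'$ via the isomorphism $q_{R'}\cong -q_{\Z h}$ of discriminant forms. The natural candidate for $R_0$ is obtained by deleting one irreducible summand of $R$; the orthogonal complement of $R_0$ in $\Num(Y)$ is then a rank-two lattice of signature $(1,1)$ containing $h$, and hence carries integral isotropic vectors. Once $E$ and $B$ are in place, Proposition \ref{prop:E}(iv) yields the desired elliptic fibration with $B$ as a smooth rational bisection, and Corollary \ref{cor:fibers} identifies its reducible fibers with the extended Dynkin diagrams $\tilde R_v$ of the irreducible summands of $R_0$.

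In the nineteen root types of Theorem \ref{thm:31} containing an $A_1$ summand, the generating $(-2)$-curve of that $A_1$ directly serves as $B$, and a discriminant-form computation in the spirit of Examples \ref{ex:1}--\ref{ex:2-3} fixes the integral isotropic $E$ with $E\cdot B=1$. For the remaining twelve root types $A_9$, $A_7+A_2$, $A_5+A_4$, $A_5+2A_2$, $3A_3$, $A_3+3A_2$, $D_9$, $D_6+A_3$, $D_5+A_4$, $D_5+D_4$, $E_7+A_2$ and $E_6+A_3$ no curve of $R$ is a bisection candidate, so $B$ must be an additional $(-2)$-curve on $Y$ lying outside $R$. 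The $A_9$ case of Example \ref{ex:1-2} is illustrative: the bisection $C$ of Figure \ref{fig:A_9} is an extra node linking an $I_8$ and an $I_2$ fiber, so that $C$ together with eight of the fiber components spans the $A_9$ diagram.

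The main obstacle is therefore the verification that, for each of these twelve $A_1$-free types, $Y$ actually carries such an additional bisection $B$. I expect the argument to be a finite, case-by-case check driven by the discriminant-form machinery of Section \ref{s:latt}, matching each root type to a rank-eight sublattice $R_0$, to an auxiliary $(-2)$-curve $B$, and -- via Criterion \ref{crit} -- to one of the sixteen extremal rational elliptic surfaces of \cite{MP} as the candidate for $\Jac(Y)$.
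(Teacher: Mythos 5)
Your overall skeleton coincides with the paper's: produce an isotropic class $E$ whose perpendicular part of $R$ has rank eight, equip the induced fibration with a smooth rational bisection, invoke Criterion \ref{crit} to see that $\Jac(Y)$ is extremal, and finish with Lemma \ref{lem:bct}. But there is a genuine gap in how you propose to get the bisection, and it stems from a wrong choice of $R_0$. You take $R_0$ to be $R$ with one irreducible summand deleted; that has rank eight only when the deleted summand is $A_1$, which is exactly why you are forced, for the twelve $A_1$-free types, to postulate an \emph{additional} $(-2)$-curve $B$ on $Y$ outside the support of $R$. Nothing in the hypotheses guarantees such a curve exists, and you correctly identify this as "the main obstacle" without resolving it. Your reading of Example \ref{ex:1-2} reinforces the confusion: the curve $C$ in Figure \ref{fig:A_9} belongs to the \emph{forward} construction (building the family from $X_{8211}$), not to the reverse direction needed here, where one starts from an arbitrary $Y$ carrying $R$ and may use only the nine given curves.

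The paper's proof removes the obstacle entirely by a different choice of $E$: it takes $E = c^\vee + \alpha h$, where $c$ is a single \emph{node} of the Dynkin diagram of $R$ (not a whole summand), $c^\vee$ its dual vector, and $\alpha h$ a rational multiple of the generator of $R^\perp$ chosen so that $E$ is integral and isotropic. Then $c\cdot E = c\cdot c^\vee = 1$ automatically, so by Proposition \ref{prop:E}(iv) the curve $c$ itself -- a member of the given configuration -- is the smooth rational bisection, and $R_0 = E^\perp\cap R$ is $R$ with the node $c$ deleted, which always has rank eight (deleting a node typically splits one summand, e.g.\ $A_9 \rightsquigarrow A_1 + A_7$ via $c = a_2$, or $A_7+A_2 \rightsquigarrow A_1+A_5+A_2$ via $c=a_2\in A_7$). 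The only case-by-case work is then the integrality of $E$, which depends on the primitive closure $R'$ and is settled by the discriminant-form data of Section \ref{s:latt}; this is the content of Table \ref{T1}. Your proposal, as it stands, cannot be completed for the twelve $A_1$-free types without this (or some equivalent) idea.
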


The proof of Proposition \ref{prop:base} will cover the remainder of this section.
In the end, we will be rather sketchy and provide only the key ingredients,
but to give an idea of the concepts and obstacles involved 
we will work out a few instructive cases in full detail.
The key idea is as follows:
using Proposition \ref{prop:E}
exhibit an elliptic fibration on the Enriques surface $Y$ 
whose singular fibers support root lattices of total rank $8$.
Since $\Jac(Y)$ shares the  singular fibers with $Y$,
Criterion \ref{crit} shows
that $\Jac(Y)$ is an extremal rational elliptic surface
(which we often determine explicitly for later use in the proof of Theorem \ref{thm}).
Then it remains to equip the given fibration on $Y$ with a smooth rational bisection
and apply Lemma \ref{lem:bct}.

\subsection{Proof of Proposition \ref{prop:base} for $\boldsymbol{R=A_9}$}
\label{ss:A_9}

Let $Y$ be an Enriques surface supporting the root type $A_9$
(such as the ones from Example \ref{ex:1-2}).
Recall from Example \ref{ex:1} that we have an orthogonal sum
\[
A_9 + \Z h \subset \Num(Y), \;\;\; h^2=10.
\]
We shall apply Proposition \ref{prop:E} to the isotropic vector
\[
E=a_2^\vee + 2h/5
\]
(which we obtain from the gluing isomorphism \eqref{eq:q}).
It gives rise to an elliptic fibration on $Y$.
Here $a_2$ induces a bisection and $E$ (or $-E$)  a half-pencil 
since $a_2.E=1$.
Perpendicular to $E$, we find the sublattices
\[
A_1=\Z a_1 \;\;\; \text{ and } \;\;\; A_7 = \langle a_3,\hdots,a_9\rangle
\]
which thus embed into the singular fibers.
Hence Criterion \ref{crit} shows that $\Jac(Y)$ is extremal
(a priori it could be either of $X_{8211}, X_{321}, X_{33}, X_{211}, X_{22}$,
but we will narrow these down to the first alternative in the next section);
then the claim of Proposition \ref{prop:base} follows from Lemma \ref{lem:bct}. \qed

 \subsection{Proof of Proposition \ref{prop:base} for $\boldsymbol{R=A_8+A_1}$}
 \label{ss:A_8+A_1}
 
 Let $Y$ be an Enriques surface of root type $R=A_8+A_1$.
 As indicated in Example \ref{ex:2-2}, 
 we have to distinguish two cases,
 depending on the primitive closure $R'$ of $R$ in $\Num(Y)$.

 \subsubsection{$R'=R$}
 \label{sss:A_8+A_1}
 
  If $R$ is primitive in $\Num(Y)$,
  then it is part of the index $18$ sublattice
  \[
  R + \Z h \subset \Num(Y),\;\;\; h^2=18.
  \]
 Consider the isotropic vector $E=a_1^\vee+2h/9$
 where $a_1^\vee \in A_8^\vee$.
 Then $E^2=0$,
 and since $a_1.E=1$,
 we derive half-pencil and smooth rational bisection.
 Perpendicular to $E$, we find the sublattice $A_7+A_1$,
 so again $\Jac(Y)$ is extremal by Criterion \ref{crit} (a priori with the same alternatives as in \ref{ss:A_9}),
 and Proposition \ref{prop:base} follows. \qed

\subsubsection{$R'=E_8+A_1$}
\label{sss:E_8+A_1'}

In this case,
Proposition \ref{prop:base} follows immediately from  Example \ref{ex:2-3}.
Note that $\Jac(Y)$ is determined by the reducible fiber of type $\tilde A_8$ to be $X_{9111}$.
\qed

\begin{Remark}
Of course, the gluing is by no means unique.
In fact, for both types of $A_8+A_1$,
we will use a different isotropic vector in the sequel
in order to ease computations
in working out the families of Enriques surfaces supporting them, see \ref{ss2:A_8+A_1}.
\end{Remark}

\subsection{Proof of Proposition \ref{prop:base} for all other root types}
\label{ss:glue}

For all these and the remaining root types $R$ from Theorem \ref{thm:31}
on some putative Enriques surface $Y$ 
the following table collects the following data: the primitive closure 
\[
R'\subset U+E_8 = \Num(Y),
\]
the isotropic vector $E$, given by a single dual vector of one of the summands of $R$
and a fraction of the positive vector $h$ of square $h^2=|\disc R'|$,
and the  intersection
\[
R_0=R\cap E^\perp,
\]
in each case a root lattice of rank $8$.
Together these information suffice to deduce from Criterion \ref{crit}
that $\Jac(|\pm2\sigma(E)|)$ is an extremal rational elliptic surface
(and read off the few possibilities for $\Jac(|\pm2\sigma(E)|)$).

\begin{table}
$$
\begin{array}{|cccc|}
\hline
R & R' & E & R_0\\
\hline
A_9 & A_9 & a_2^\vee + 2h/5 & A_7+A_1\\
A_8+A_1 & A_8+A_1 & (a_1^\vee,0)+2h/9 & A_7+A_1\\
& E_8+A_1 & (0,a_1^\vee) + h/2 & A_8\\
A_7 + A_2 & E_7 + A_2 & (a_2^\vee,0) + h/2 & A_5+A_2+A_1\\
A_7 + 2A_1 & E_8 + A_1 & (a_4^\vee,0,0) + h & 2A_3+2A_1\\
A_6+A_2+A_1 & A_6+A_2+A_1 & (a_1^\vee,0,0)+h/7 & A_5+A_2+A_1\\
A_5 + A_4 & A_5+A_4 & (0,a_2^\vee)+h/5 & A_5 + A_2 + A_1\\
A_5+A_3+A_1 & E_6+A_3 & (a_2^\vee,0,0)+h/3 & 2A_3+2A_1\\
A_5+2A_2 & E_7+A_2 & (0,0,a_1^\vee)+ h/3 & A_5+A_2+A_1\\
A_5+A_2+2A_1 & E_8+A_1 &(0,0,0,a_1^\vee)+h/2 & A_5+A_2+A_1\\
2A_4+A_1 & E_8 + A_1 & (0,0,a_1^\vee) + h/2 & 2A_4\\
A_4+A_3+2A_1 & A_4+D_5 & (a_1^\vee,0,0,0)+h/5 & 2A_3+2A_1\\
3A_3 & D_9 & (a_2^\vee,0,0) + h/2 & 2A_3+2A_1\\
2A_3+A_2+A_1 & E_7 + A_2 & (0,0,a_1^\vee,0) + h/3 & 2A_3+2A_1\\
2A_3+3A_1 & E_8+A_1 & (0,0,0,0,a_1^\vee)+h/2 & 2A_3+2A_1\\
A_3 + 3A_2 & A_3 + E_6 & (a_1^\vee,0,0,0) + h/4 & 4A_2 \\
D_9 & D_9 & d_9^\vee + 3h/4 & A_8\\ 
D_8+A_1 & E_8+A_1 & d_8^\vee + h & A_7+A_1\\
D_7+2A_1 & D_9 & (d_1^\vee,0,0)+h/2 & D_6+2A_1\\
D_6+A_3 & D_9 & (0,a_2^\vee)+h/2 & D_6+2A_1\\
D_6+A_2+A_1 & E_7 + A_2 & (0,a_1^\vee,0) + h/3 & D_6+2A_1\\
D_6+3A_1 & E_8+A_1 & (0,0,0,a_1^\vee) + h/2 & D_6+2A_1\\
D_5+A_4 & D_5+A_4 & (d_5^\vee,0) + h/4 & 2A_4\\
D_5+A_3+A_1 & E_8+A_1 & (0,0,a_1^\vee) + h/2 & D_5 + A_3\\
D_5+D_4 & D_9 & (0,d_1^\vee)+h/2 & D_5+A_3 \\
D_4+A_3+2A_1 & D_9 & (d_1^\vee,0,0,0)+h/2 & 2A_3+2A_1\\
2D_4+A_1 & E_8+A_1 & (0,0,a_1^\vee)+h/2 & 2D_4\\
E_8+A_1 & E_8+A_1 & (0,a_1^\vee) + h/2 & E_8\\
E_7+A_2 & E_7+A_2 & (e_7^\vee,0) + h/2 & E_6+A_2\\
E_7 + 2A_1 & E_8 + A_1 & (0,0,a_1^\vee) + h/2 & E_7 + A_1 \\
E_6+A_3 & E_6+A_3 & (e_6^\vee,0)+h/3 & D_5+A_3\\
E_6+A_2+A_1 & E_8+A_1 & (0,0,a_1^\vee)+h/2 & E_6+A_2\\
\hline
\end{array}
$$
\caption{Isotropic vectors and root lattices for the proof of Proposition \ref{prop:base}}
\label{T1}
\end{table}

To complete the proof of Proposition \ref{prop:base},
it suffices to argue with the $(-2)$ curve $C\subset Y$
whose dual vector is used for the gluing.
By construction, we have $C.E=1$ in either case,
so $E$ induces a half-pencil and $C$  a smooth rational bisection for the fibration by Proposition \ref{prop:E}.
Hence Lemma \ref{lem:bct} implies that $Y$ arises from an Enriques involution of base change type
originating from the extremal rational elliptic surface $\Jac(|\pm2\sigma(E)|)$. \qed

\section{Fiber analysis}
\label{s:fiber}

We shall now provide the only missing puzzle piece
needed to translate the  data found so far into explicit equations.
In particular, this analysis will be crucial for the determination of all
the moduli components for a given root type $R$ from Theorem \ref{thm:31}
(as opposed to a sole existence statement).

To this end, we start from the data provided in Table \ref{T1} 
which endow an Enriques surface $Y$ with given root type $R$ with an elliptic fibration $|2E'|$ by Proposition \ref{prop:E}.
We then analyse
how the (anti-)effective $(-2)$-divisors originating from $R_0$
by way of the composition of reflections $\sigma$ in the proof of Proposition \ref{prop:E}
may embed into the singular fibers of the elliptic fibration.
Recall from Remark \ref{rem:curves?} that in general we cannot expect these divisors to be single curve classes.
Presently, however, this luckily happens to be the case (much like in Example \ref{ex:2-3}):

\begin{Proposition}
\label{prop:curves}
In the above set-up, all curves representing $R_0$ are mapped to fiber components of the elliptic fibration $|2E'|$
by a suitable composition of reflections. 
In particular, their intersection pattern with the smooth rational bisection $B'$ can be read off directly from $R$,
only depending on the configuration of singular fibers and their multiplicities.
\end{Proposition}

As before, we exhibit the detailed proof of 
Proposition \ref{prop:curves} only for two root types
and omit the tedious rest for space reasons.
Without further reference, we will use standard facts about root lattices and their embeddings,
especially uniqueness properties up to the action of the Weyl group (see for instance \cite{Bourbaki}).

 \subsection{Proof of Proposition \ref{prop:curves} for $R=A_9$}
 \label{ss:pf1}
 
 Recall from \ref{ss:A_9}
 that there is a half-pencil $E'$
with smooth rational bisection $B'=\sigma(a_2)$
and perpendicular $R_0=A_7+A_1$.
Without further information, these lattices embed into three configurations of reducible fibers
of an Enriques surface (or an (extremal) rational elliptic surface):
\begin{eqnarray}
\label{eq:3}
\tilde A_7 + \tilde A_1, \;\; \tilde E_7 + \tilde A_1, \;\; \tilde E_8
\end{eqnarray}
(which lead to the extremal rational elliptic surfaces listed in \ref{ss:A_9}).
The last alternative, however, can be excluded right away since it would only have one simple fiber component
which thus meets $B'$ with multiplicity two. Hence there can be no divisors supported on the fibers 
(such as $\sigma(a_1), \sigma(a_3)$) intersecting $B'$ with multiplicity one, contradiction.
 
 For the second case from \eqref{eq:3}, i.e.~$\sigma(A_7)\hookrightarrow \tilde E_7$,
 there is a similar argument: since $\sigma(a_3).B'=1$, the bisection $B'$ has to intersect
 both simple fiber components transversally 
 while $\sigma(a_3)$ only involves one of them and $\sigma(a_i)$ neither for $i=4,\hdots,9$.
That is, we can omit one fiber component, and $\sigma(A_7)\hookrightarrow E_7$.
But then there is a divisible class in $\sigma(A_7)$ and thus in $A_7$;
however, $A_7$, being an orthogonal summand of the primitive lattice $R_0=R\cap E^\perp$,
 is itself primitive in $\Num(Y)$, contradiction.
 
 It remains to investigate the first alternative from \eqref{eq:3}.
 We start by analysing the embedding $\sigma(A_1)\hookrightarrow \tilde A_1$.
 Since $\sigma(a_1).B'=1$, the bisection $B'$ meets either one or both components transversally.
 In the second case, $\sigma(a_1)$ can only be one of the components as claimed
 while in the first case, which corresponds to a multiple $I_2$ fiber,
 we could also have $\sigma(a_1)=\Theta+\Theta'$,
 but then a reflection in the component not met by $B'$ yields the claim.
 
 We now turn to the embedding $\sigma(A_7)\hookrightarrow \tilde A_7$.
 First note that the $I_8$ fiber cannot ramify.
 Otherwise the K3 cover $X$ would attain an $I_{16}$ fiber
 which would be met by the section $P$ (mapping to $B'$)
 in the component opposite to the identity component
 (in a two-torsion point of the singular fiber
 which is off the identity component, compare the logarithmic transformation).
 We continue to argue with the height from the theory of Mordell-Weil lattices \cite{ShMW}, 
 applied to $X$.
 Since $P$ meets the two $I_2$ fibers nontrivially (or in case of ramification, the $I_4$ fiber),
 as we have seen above,
the height returns
 \begin{eqnarray}
 \label{eq:ht}
 h(P) = 4-\frac{8\cdot 8}{16} - 1 = -1 <0,
 \end{eqnarray}
but this is impossible. Hence the $I_8$ fiber is reduced and met by $B'$ in two different components, say $\Theta_0, \Theta_j (0<j<8)$
under a cyclic numbering,
for otherwise $\sigma(a_3).B'=1$ would be impossible.
Without loss of generality, we may assume that $\sigma(a_3)$ does not contain $\Theta_0$ in its support,
and neither do the other $\sigma(a_j)$
which are also off $\Theta_j$, since they are orthogonal to $B'$.
Hence there is an embedding
\[
A_6 =\langle\sigma(a_4),\hdots,\sigma(a_9)\rangle \hookrightarrow A_{j-1} + A_{7-j} = \langle \Theta_1,\hdots,\Theta_{j-1}\rangle
+ \langle \Theta_{j+1},\hdots,\Theta_7\rangle.
\]
This is only possible for $j=1$ or $j=7$, i.e.~$B'$ meets two adjacent fiber components.
After changing the orientation, if necessary, we may assume $j=1$
and apply successive reflections in $\Theta_2,\hdots, \Theta_7$
to ensure that
\[
\sigma(a_i) = \Theta_{i-2} \;\;\; (i=4\hdots,9).
\]
But then the given intersection numbers imply that $\sigma(a_3)=\Theta_1$, and the claims of Proposition \ref{prop:curves} follow.
\qed

Note that the above findings are consistent with Figure \ref{fig:A_9} in Example \ref{ex:1-2}.
 
 \begin{Remark}
 For the base change type construction,
 we could have terminated our calculations where $B'$ was found to met adjacent fiber components
 because this is all the information needed to compute the sections $P'$ and thus $P$.
 \end{Remark}

 \subsection{Proof of Proposition \ref{prop:curves} for $R=A_8+A_1$}
 \label{ss:A8+A1}
 
 In the imprimitive case $R'=E_8+A_1$, we have already seen the claims of Proposition \ref{prop:curves} in Example \ref{ex:2-3}.
 Thus it remains to study the primitive case, with $R_0=A_7+A_1$
 and $\sigma(a_2).B'=1$.
 For the fiber configurations, we have the same candidates as \eqref{eq:3};
in fact, the second and third alternative are ruled out exactly as in \ref{ss:pf1}. 
The same arguments also apply to the first alternative from \eqref{eq:3} if the $I_8$ fiber is unramified.
Contrary to \ref{ss:pf1}, however, the first alternative allows for the $I_8$ fiber to be multiplicative as well
(so that $P$ has height zero and thus order two).
In that case, the bisection $B'$ meets a single fiber component, say $\Theta_0$
which thus is part of the support of $\sigma(a_2)$, but not for $\sigma(a_i)$ for $i=3,\hdots,8$.
Ignoring $\Theta_0$, we obtain an embedding
\[
A_6 \cong \langle\sigma(a_3),\hdots,\sigma(a_8)\rangle \hookrightarrow A_7.
\]
After some further reflections, we can thus assume that 
\[
\sigma(a_i) = \Theta_{i-2} \;\;\; (i=3,\hdots,8).
\]
 Then the intersection numbers predict that $\sigma(a_2)=\Theta_0$,
 and Proposition \ref{prop:curves} is proven in all instances of $R=A_8+A_1$.
 \qed

 \subsection{Important observation}
 
 For immediate use in the moduli component analysis,
 we record the following by-product of the proof of Proposition \ref{prop:curves}:
 
 \begin{Corollary}
 \label{cor:by}
 In the above set-up,
 the root type $R_0$ always is supported on a unique configuration of singular fibers
 (in fact given by the $\tilde R_v$ for the orthogonal summands of $R_0$)
 unless
 there is an additional fiber configuration to be found in Table \ref{Tx}.
 \end{Corollary}

\begin{table}[ht!]
$$
\begin{array}{|ccc|}
\hline
R & R'  & \text{fibers}\\
\hline
A_7+2A_1 
& 2A_3+2A_1 & \tilde D_6+2\tilde A_1\\
&& \tilde E_7 + \tilde A_1\\

A_5+A_3+A_1 
& 2A_3+2A_1 & \tilde D_5+\tilde A_3\\

3A_3 
& 2A_3+2A_1 & \tilde D_5+\tilde A_3\\

D_6+A_3 
& D_6+2A_1 & \tilde D_9\\
\hline
\end{array}
$$
\caption{Additional fiber configurations}
\label{Tx}
\end{table}

 We emphasize that the corollary does not take multiple fibers into account yet.
 These will enter the picture in the next sextion.

 \section{One-dimensional families}
 \label{s:fam}
 
 It is about time to take advantage of the precise (though abstract) information about the putative Enriques surfaces
 obtained from the maximal root types in the previous sections
 in order to produce explicit parametrizations and thereby 
prove Theorem \ref{thm}.
 The overall strategy should be clear:
 given an Enriques surface $Y$ supporting a root lattice from Theorem \ref{thm:31},
 Proposition \ref{prop:base} tells us that $Y$ arises from an extremal rational elliptic surface $S$
 through the base change type construction of Enriques involutions.
 Indeed, Table \ref{T1} mostly  predicts  $S$,
 with a few exceptions given in Corollary \ref{cor:by} and Table \ref{Tx}.
 Moreover, Proposition \ref{prop:curves} paves the way
 to exactly pinpoint the bisection $B'$ on $Y$
 and how it intersects the reducible singular fibers.
  This information easily translates into the section $P$ on the K3 cover $X$,
 and the underlying section $P'$ on some quadratic twist $S'$ of $S$.
 Recall from Section \ref{s:tech} how the shape of $P'$ is predicted:
 as a polynomial $x(P')$ of degree $2$ (see \eqref{eq:x(P')}).
 This puts us in the  position
 to aim directly for a parametrization of all quadratic twists $S'$ admitting a section $P'$ of the required shape.

\subsection{Explicit computations}
\label{ss:explicit}

It is a rather exceptional instance that we can parametrize all families of Enriques surfaces explicitly.
Before going into a few details, we give a brief account of the obstacles against this approach.

Roughly, there are three cases when going for the explicit parametrizations,
starting from an extremal rational elliptic surface $S$.
Recall that we want to implement the quadratic twists $S'$ of $S$
in such a way that they attain an integral section $P'$ of prescribed shape.

\subsubsection{}
\label{sss:2f}
If $P'$ meets two different singular fibers non-trivially 
(in addition to $I_0^*$ fibers originating from the twist, such as in  Example \ref{ex:1-2}),
 or one fiber at a fiber component not adjacent to the identity component,
 then this provides two conditions for $x(P')$ which is thus determined up to one parameter;
hence we directly derive the desired family of Enriques surfaces.

 \subsubsection{}
 \label{sss:1f}
 If $P'$ meets  one  singular fiber (in addition to $I_0^*$ fibers originating from the twist) 
 in a component adjacent to the identity component,
then this gives one condition on $x(P')$ and thus leaves two parameters.
Substituting into the RHS of the extended Weierstrass form of $S$ from Table \ref{T0}
gives a square factor and  a degree $4$ polynomial $g$ in $t$.
Then we simply solve for the discriminant of $g$ to vanish 
while preserving the required shape of $P'$
and ensuring that $q$ does not degenerate.

\subsubsection{}
\label{sss:0f}
If $P'$ meets no singular fibers other than the two $I_0^*$ fibers originating from the twist
non-trivially,
then this gives no new conditions on $x(P')$
and there are three parameters to master.
In general, this complicates computations substantially,
but one can still handle them. 

\subsubsection{Effect of two-torsion}
\label{sss:2t}

Even in the last case,
the situation changes drastically in the presence of two-torsion
in the Mordell-Weil group.
Normalising the twisted Weierstrass form of $S'$ as in Table \ref{T0} to
\[
S':\;\;\; qy^2 = x(x^2+a_2x+a_4),\;\;\; a_i\in K[t],\;\; \deg a_i\leq i,
\]
we find that $x(P')$ and $a_4$ are coprime,
since otherwise $P'$ would intersect the corresponding singular fiber non-trivially.
In order to be able to solve for $y(P')$,
we dedude that 
\begin{itemize}
\item 
either $x(P')$ is a perfect square 
(thus reducing the number of parameters to two,
so that one can solve as in \ref{sss:1f})
\item
or
$x(P')=\alpha q$ for a scalar $\alpha\in K$.
Here one can solve directly for the remaining factor on the RHS upon substitution,
 $x(P')^2+a_2 x(P')+a_4$
to give a perfect square.
\end{itemize}
Of course, these ideas also apply to simplify our considerations if $P'$ meets some singular fibers non-trivially,
but then we have to pay a closer look into the precise conditions resulting since $x(P')$ and $a_4$ need not
be coprime anymore (see \ref{ss:prim} for an exemplary case).

\subsection{Round-up for root type $R=A_9$}
\label{ss:A_9'}

For this root type and the data from Table \ref{T1},
Proposition \ref{prop:curves} and the details of its proof in \ref{ss:pf1}
predict that $\Jac(Y) = X_{8211}$ 
and $P'$ meets both reducible fibers, of Kodaira type $I_8$ and $I_2$ non-trivially.
Hence \ref{sss:2f} kicks in to yield exactly the family of Enriques surfaces from Example \ref{ex:1-2}
supporting $R$.
Recall from \ref{ss:pf1} that the $I_8$ fiber is unramified
while a priori the $I_2$ fiber may be multiple.
Indeed, we read off from the given parametrization that this happens exactly at $\mu=\pm 1$. 

In conclusion, we have verified that
any Enriques surface supporting the root type $R=A_9$ lies in the given family.

\subsection{A first look at the moduli components}

A similar picture persists for all other root types from Table \ref{T1}.
We omit the details of the computations for space reasons,
but we will soon list all the families of Enriques surfaces explicitly (Table \ref{T2}).
However, we will sometimes switch to a different model of the families
which is more convenient (in particular, less complicated, already in terms of the algebraic expressions involved).
Since this thread of thought is somewhat orthogonal to the present one,
we postpone listing the explicit equations until Section \ref{s:explicit}
and pay a first closer look to the moduli components involved.

The moduli components are directly obtained from the explicit parametrization approach.
Usually one expects only one irreducible component
(such as for $R=A_9$ in \ref{ss:A_9'}),
but there are three possible reasons for additional components:
\begin{enumerate}[(i)]
\item
the data from Table \ref{T1} may allow for different fiber configurations (captured in Table \ref{Tx});
\item
given a fiber configuration, different fiber multiplicities may admit distinct families;
\item
the parametrization of a given type with fixed fiber multiplicities may not be unique up to symmetries.
\end{enumerate}
We will see in Proposition \ref{prop:unique} that the third case essentially does not occur.
Meanwhile the second alternative may be a bit counter-intuitive,
since ramification at a  singular fiber is a codimension one-condition in the moduli of quadratic base changes,
but there is a surprising twist.
Namely ramification at suitable fibers may allow for the section $P$ to be two-torsion
so that it may already be present on $S$ and $S'$ at no extra cost in terms of parameters.
We illustrate this with an investigation of  the following root type.

\subsection{Primitive root type $R=A_8+A_1$}
\label{ss:prim}

Recall from \ref{ss:A8+A1} that the data from Table \ref{T1} for the primitive case of $R=A_8+A_1$
lead to $\Jac(Y)=X_{8211}$.
More importantly, they
allow for the $I_8$-fiber to be ramified or unramified.
We discuss both situations.

If the $I_8$ fiber is unramified, then by \ref{ss:A8+A1} 
the bisection $B'$ meets it in two adjacent fiber components
while meeting the $I_2$ fiber in a single component as depicted below
(where the dashed line indicates that the $I_2$ fiber may ramify):

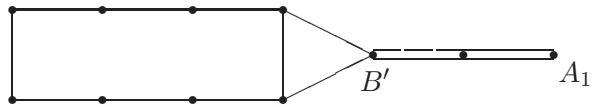
\begin{figure}[ht!]
\setlength{\unitlength}{.6mm}
\begin{picture}(120,22)(0,0)
%
\multiput(0,0)(20,0){4}{\circle*{1.5}}
\put(0,0){\line(1,0){60}}


\multiput(0,20)(20,0){4}{\circle*{1.5}}
\put(0,20){\line(1,0){60}}



\put(0,0){\line(0,1){20}}
\put(60,0){\line(0,1){20}}

\put(77,2){$B'$}


\put(60,20){\line(2,-1){20}}
\put(60,0){\line(2,1){20}}

\put(80,11){\line(1,0){6}}
\put(87,11){\line(1,0){6}}
\put(94,11){\line(1,0){6}}

\put(80,9){\line(1,0){40}}
\put(100,11){\line(1,0){20}}

\multiput(80,10)(20,0){3}{\circle*{1.5}}
\put(121,4){$A_1$}

%
%

\end{picture}
\caption{$A_8+A_1$-configuration with an unramified $I_8$ fiber}
\label{fig:ram8}
\end{figure}

Thus \ref{sss:1f} applies (as a degenerate case of \ref{sss:2t}),
and
\[
x(P') = \mu t + \lambda, \;\; \; \mu\neq 0, \lambda \neq -1
\]
(since otherwise $P'$ would meet different fiber components).
The resulting discriminant vanishes exactly when
\[
\mu = -\dfrac{2s^3}{s^2+1}, \;\; \lambda = \dfrac{s^4-3s^2}{s^2+1}
\]
for some parameter $s\neq 0,1$, so we obtain the desired irreducible family of Enriques surfaces.

On the other hand,
we already noted before that ramification at the $I_8$ fiber causes $P$ to be two-torsion;
in fact, $P=(0,0)$, and we can let the second ramification point vary (outside the other singular fibers,
since otherwise the involution would attain fixed points).
That is,
\[
q = t-\lambda, \;\;\; \lambda \neq 0, \pm 2\sqrt{-1},
\]
and we can depict the resulting smooth rational curves on the one-dimensional family of Enriques surface as follows:

\begin{figure}[ht!]
\setlength{\unitlength}{.6mm}
\begin{picture}(120,22)(0,0)
%
\multiput(0,0)(20,0){7}{\circle*{1.5}}
\put(0,0){\line(1,0){80}}


\multiput(0,20)(20,0){4}{\circle*{1.5}}
\put(0,20){\line(1,0){60}}



\put(0,0){\line(0,1){20}}
\put(60,0){\line(0,1){20}}


%

\put(77,4){$B'$}

\put(80,-1){\line(1,0){40}}
\put(80,1){\line(1,0){40}}

\put(117,4){$A_1$}

%
%

\end{picture}
\caption{$A_8+A_1$-configuration with a ramified $I_8$ fiber}
\label{fig:mult8}
\end{figure}
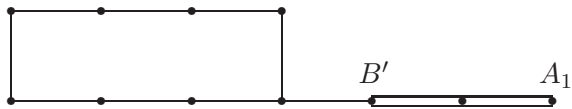

\begin{Remark}
\label{rem:over}
It is important to note that the $(-2)$-curves in Figure \ref{fig:mult8} also support the root types
$A_7+2A_1, D_8+A_1, E_8+A_1$.
\end{Remark}

\subsection{Rough component analysis}

Let us return to the components of the moduli spaces of the root types from Theorem \ref{thm:31}.
Using our explicit approach towards parametrizing the Enriques surfaces with maximal root types
(which will ultimately lead to Table \ref{T2}),
we find the following two fundamental results.
The first is a sheer existence result:

\begin{Proposition}
\label{prop:ex}
Any root type from Theorem \ref{thm:31}
is supported on a one-dimensional family of Enriques surfaces
with reduced configuration of singular fibers given by the data in Table \ref{T1}.
\end{Proposition}

\begin{Remark}
The same statement holds for the additional fiber configurations from Table \ref{Tx},
except for those for $R=A_7+2A_1$ (see the next proposition).
\end{Remark}

\begin{Proposition}
\label{prop:unique}
For any fiber configuration originating from Table \ref{T1},
i.e.~the singular fibers are either given as $\tilde R_v$ for the orthogonal summands of $R_0$
or they are in Table \ref{Tx},
there is, up to isomorphism,
 a unique one-dimensional family of Enriques surfaces supporting the root type $R$ on reduced fibers,
except in the following cases:
\begin{itemize}
\item
$R=D_6+3A_1$ where there are two families (see \ref{sss:D_6+3A_1});
\item
$R=A_7+2A_1$ with fibers $\tilde D_6+2\tilde A_1$ or $\tilde E_7+\tilde A_1$ where there
automatically is a multiple $I_2$ fiber (see \ref{ss:721});
\item
there are two families, one without ramified fibers and one with a multiple fiber as listed in Table \ref{Ty}.
\end{itemize}
\end{Proposition}

\begin{table}[ht!]
$$
\begin{array}{|cccc|}
\hline
R & R' &  R_0 & \text{possible fiber configurations}\\
\hline
A_8+A_1 & A_8+A_1 & A_7+A_1 & 
2I_8, I_2, I_1, I_1\\

A_7 + 2A_1 & E_8 + A_1 & 2A_3+2A_1 & I_2^*, 2I_2, I_2\\
A_5+2A_2 & E_7+A_2 &  A_5+A_2+A_1 & 
2I_6, I_3, I_2, I_1\\

3A_3 & D_9 &  2A_3+2A_1 & 
2I_4, I_4, I_2, I_2\\
&&& I_1^*, 2I_4, I_1\\

D_8+A_1 & E_8+A_1  & 
A_7+A_1 & 
2I_8, I_2, I_1, I_1\\

D_4+A_3+2A_1 & D_9 & 2A_3+2A_1 &
2I_4, I_4, I_2, I_2\\

\hline
\end{array}
$$
\caption{Fiber configurations admitting several moduli components}
\label{Ty}
\end{table}

For most root types, we can thus already answer the problem of the number
of moduli components:

\begin{Corollary}
\label{cor:unique}
If $R\neq D_6+3A_1$ is not listed in Table \ref{Tx} or \ref{Ty},
then its moduli space is irreducible.
\end{Corollary}

Conversely, we have an easy observation concerning
certain overlattices being supported on a given family of Enriques surfaces (compare Remark \ref{rem:over}).

\begin{Observation}
\label{obs}
If $R$ is supported on an Enriques surface $Y$
with fiber configuration from Table \ref{Tx} or \ref{Ty},
then $Y$ supports an integral overlattice $\hat R$ of $R$ inside $R'$.
\end{Observation}

This observation has a big impact on the rest of the paper.
For one thing, it already indicates that the moduli components should be distinct
(which one can verify indeed, see Section \ref{s:number}).
Secondly it eases the presentation of all the one-dimensional families of Enriques surfaces to follow.
Namely the extra moduli components for $R$ will  be listed (in Table \ref{T2}), without further specification, 
under the respective overlattices $\hat R$.

 \section{Explicit equations}
 \label{s:explicit}

 This section presents the results of our explicit parametrizations
 (which we used to prove Propositions \ref{prop:ex} and \ref{prop:unique},
 for instance).
 We postponed their presentation because for some root types
 we chose to list a different parametrization (compared to the data from Table \ref{T1})
 which is less complicated.
 
 \begin{Theorem}
 \label{thm:any}
 Any Enriques surface supporting a maximal root type (from Theorem \ref{thm:31})
 can be found in Table \ref{T2}.
 \end{Theorem}
 
 \begin{Remark}
 Note that there is a duplicate in Table \ref{T2}
 as the root types $R=D_6+3A_1$ and $E_7+2A_1$ 
 file under the same family (see Remark \ref{rem:prim}).
 \end{Remark}
 

 The method to prove Theorem \ref{thm:any}
 has been explained in thorough detail in \ref{ss:explicit}.
 Indeed we have seen explicit parametrizations for $R=A_9$ in \ref{ss:A_9'},
 and for (primitive root type) $R=A_8+A_1$ in \ref{ss:prim}.
 After providing the equations in \ref{ss:eq},
 we will include explanations for all those root types
 where the families in Table \ref{T2}
 does not originate directly from the data in Table \ref{T1}
 (and explain the reason).

\subsection{Explicit equations}
\label{ss:T3}
\label{ss:eq}

 The following table supplements Theorem \ref{thm:any}
 by collecting all Enriques surfaces supporting a root type from Theorem \ref{thm:31}.
 As it happens, they  all appear in one-dimensional families as stated in Theorem \ref{thm}.
 The base is always $\PP^1$,
 with varying parameter $\mu, \lambda, s$ or $u$.
The Enriques surfaces $Y$ are given in terms of
\begin{itemize}
\item
an extremal rational elliptic surface $S$ which is
the jacobian $\Jac(Y)$ of an elliptic fibration on $Y$
(cf.~Proposition \ref{prop:base}),
\item
its quadratic twist $S'$, given by a quadratic polynomial $q\in K(\PP^1)[t]$,
\item
a section $P'$ of $S'$ (given by $x(P')$)
which induces the Enriques involution of base change type on the K3 cover $X$ of $Y$.
\end{itemize}
Unless stated in the subsequent paragraphs, the respective elliptic fibration is obtained
in the canonical  way with reduced fiber configuration from the data in Table \ref{T1}
(as stated in Proposition \ref{prop:unique}).

 The reader should be advised that there always is finite set of places of $\PP^1$
 where the parameter ceases to give an Enriques surface due to degenerations,
notably ramification points coming together or the involution attaining fixed points.
 We do not list these places explicitly, as they are easily calculated.

 \begin{table}[ht!]
 $$
 \begin{array}{|lccc|}
 \hline
 \text{root type} & {\Jac(Y)} & \text{section:} \, x(P') & \text{quadr.~twist:} \, q\\
 \hline
A_9 & X_{8211} & \mu t -1 & (\mu t-1) (\mu t +\mu^2-1)\\
A_8 + A_1 & X_{6321} & \mu t + 1 & (\mu t+1)(\mu t + (2\mu+1)^2)
\\
A_7+A_2 & X_{6321} & \mu(t-1)
&
\mu t^2+4\mu (\mu+1) t-4(\mu+1)^2
\\
A_7+2A_1 & X_{4422} & \mu t & (\mu t-1)(t-\mu)\\
A_6+A_2+A_1 & X_{6321} &
-(ut-4u^2+3u-1)/(u^2(u+1))
&\\
&&\multicolumn{2}{r|}{((u^2+u)t-(u-1)^2)(ut-4u^2+3u-1)}\\
A_5+A_4 & X_{6321} & (t-1)(\mu t-1) & (\mu t-1)(\mu(4\mu+1)t-(2\mu+1)^2)\\
A_5+A_3+A_1 & X_{4422} &
\mu t+1-\mu & 
(\mu t+1-\mu)(t+1-\mu)
\\
A_5+2A_2 & X_{6321} & 27(t+u)(t-1)/((u-8)(u+1)^2) &
(t+u)(3(u+4)t-(u-2)^2)\\
A_5+A_2+2A_1 & X_{6321} & 27(t-u)^2/((u-1)(u+8)^2) &\\
&&\multicolumn{2}{r|}{
3(u+11)t^2-4(u^2+13u+4)t+4(u-4)^2}
\\
2A_4+A_1 & X_{5511} &
\multicolumn{2}{l|}{-(s^5t^2-s^3(11s^2-15s+5)t-(s-1)^5)/(5s^2-5s+1)^2}
\\
&&\multicolumn{2}{r|}
{s^2(4s-1)t^2-2(2s-1)(11s^2-11s+2)t-(4s-3)(s-1)^2}
\\
A_4+A_3+2A_1 & X_{4422} &
4\mu(t+\mu) &
(t+\mu) 
(4\mu t-4\mu^2-1)\\
3A_3 & X_{4422} & \mu t^2-\mu+1
& \mu t^2-\mu+1 \\
2A_3 + A_2 + A_1 & X_{4422} & (\mu t-\mu+1)^2 & 
(\mu t-\mu+2)(\mu t+t-\mu+1)
\\
2A_3+3A_1 & X_{4422} &
-(t^2+4\mu t+1)/(4\mu^2-1)&
t^2+4\mu t+1
\\
A_3+3A_2 
& X_{3333} & \multicolumn{2}{l|}{(t-1)((s^3+6s^2+9s+3)t+s^3+3s^2-3)/s^2(2s+3)}\\
&&\multicolumn{2}{r|}{((6s+3)(s+1)^2t^2+(8s^3+12s^2-6s-6)t+4s^3-6s+3)}\\
D_9 & X_{9111} & \mu & 
\mu^2t^2+2\mu t+4\mu^3+1
\\
D_8+A_1 & X_{8211} & \mu & \mu t^2 + (\mu+1)^2
\\
 D_7+2A_1 & X_{222} & t+\mu & (t+\mu)(t+\mu-1)
 \\
 D_6+A_3 & X_{222} & (\mu t+1-\mu)t & (\mu t+1-\mu)(\mu t+1)\\
 D_6+A_2+A_1 & X_{222} & -4\lambda t(\lambda t-1)
 & (\lambda t-1)(4\lambda^2 t-4\lambda+1)
 \\
 D_6+3A_1 & X_{222} & 0 & t(t-\lambda)\\
 D_5+A_4 & X_{5511} & \multicolumn{2}{r|}{ \mu \;\;\;\;\;\; \;\;\;\;\;\;\;\;\;\;\;\;\;\;
 (\mu+1)^2t^2-2\mu(3\mu+1)t+\mu^2(4\mu+1)}
 \\
 D_5 + A_3 + A_1 
 & X_{4422} & 1 & (t-1)(t-\lambda)
 \\
 D_5+D_4 & X_{141} & \mu & (\mu+1)t^2+2\mu t + \mu^2\\
 D_4+A_3+2A_1 & X_{4422} &\mu & t^2-\mu\\
  2D_4+A_1 & \multicolumn{2}{l}{\;X_{-4\mu(4\mu+1)} \;\;\;\;\;\;\;\;\;\;\;\;\;\;\;\;\; (t+\mu)^2} & t^2+(6\mu+1)t+\mu^2
 \\
 E_8+A_1 & X_{321} & 0 & t(t-\lambda)\\
 E_7+A_2 & X_{6321} & 0 & t-\lambda\\
 E_7+2A_1 & X_{222} & 0 & t(t-\lambda)\\
 E_6+A_3 & X_{141} & (\mu t-1)t & (\mu t-1)(\mu(\mu+1) t-1)
 \\
 E_6+A_2+A_1 & X_{6321}  & 0 & (t-1)(t-\lambda)
 \\
 \hline
 \end{array}
 $$
 \caption{Explicit families of Enriques surfaces supporting the 31 maximal root types}
 \label{T2}
 \end{table}

\subsection{Root type $R=A_8+A_1$}
\label{ss2:A_8+A_1}
\label{ss2:E_8+A_1}
\label{ss:not}

Previously
we distinguished three cases:
\begin{enumerate}
\item
\label{it1}
$R$ primitive and $\Jac(Y)=X_{8211}$ with unramified $I_8$ fiber;
\item
\label{it2}
$R$ primitive and $\Jac(Y)=X_{8211}$ with ramified $I_8$ fiber;
\item
\label{it3}
$R'=E_8+A_1$ and $\Jac(Y)=X_{9111}$.
\end{enumerate}
For each case, starting from the isotropic vector $E$ from Table \ref{T1},
one can work out the Enriques involution of base change type,
based on the extremal rational elliptic surface $X_{8211}$ resp. $X_{9111}$.
In particular, this shows that each case occurs in a one-dimensional family.

However, there is a surprising symmetry which the above approach does not detect:

\begin{Lemma}
\label{lem:iso}
The Enriques surfaces supporting the first and the third configurations form the same family.
\end{Lemma}

To prove Lemma \ref{lem:iso} it is convenient to work with a different isotropic vector than in Table \ref{T1}
which at the same time facilitates the representation of the family (especially for \eqref{it3}
which in the above form would be rather complicated).

Given an Enriques surface $Y$ supporting the root type $A_8+A_1$,
we argue with the dual vector $a_3^\vee\in A_8^\vee$ of square $(a_3^\vee)^2=-2$.
Note that as an abstract lattice $E_8=\langle A_8,a_3^\vee\rangle$, so $a_3^\vee\in R'$ in case \eqref{it3},
but not in the other two cases.
Consider the isotropic vector
\[
E = \begin{cases}
a_3^\vee + h/3, & \text { in case \eqref{it1}, \eqref{it2}},\\
a_3^\vee + h, & \text{ in case \eqref{it3}}
\end{cases}
\]
(after checking that in case \eqref{it1}, we indeed have $E\in\Num(Y)$).
Since $a_3.E=1$, $E$ induces the half-pencil of some elliptic fibration on $Y$ and $a_3$ a bisection
by Proposition \ref{prop:E}.
Perpendicular to $E$ in $R$, we find $A_1$ and inside $A_8$
\[
A_2 = \langle a_1,a_2\rangle,\;\;\; A_5 = \langle a_4,\hdots,a_8\rangle,
\]
i.e.~$R_0=A_5+A_2+A_1$.
By Criterion \ref{crit}, the rational elliptic surface $\Jac(Y)$ is extremal.
We distinguish the possible cases which a priori comprise
\begin{eqnarray}
\label{eq:cases}
X_{6321}, \; X_{431}, \; X_{44}, \; X_{321}, \; X_{33}, \; X_{211}, \; X_{22}.
\end{eqnarray}

\subsubsection{$\Jac(Y) = X_{6321}$}
\label{sss:6321}

Assume that the fibration $|2E'|$ has fibers $\tilde R_v$ for the orthogonal summands of $R_0$.
Then $\Jac(Y)=X_{6321}$, and we are led to construct a quadratic twist $S'$ with section $P'$ meeting
both $I_6$ and $I_3$ fibers in components adjacent to the identity component.
Thus we are in the situation from \ref{sss:2f} which directly leads to the family from Table \ref{T2}.
Note that the degenerate case with double $I_3$ fiber occurs at $\mu=-1/2$
while a double $I_6$ fiber cannot occur by inspection of the height pairing similar to \eqref{eq:ht}.

\subsubsection{$\Jac(Y) = X_{321}$}
\label{ss:321}

Alternatively we may embed 
\[
A_5+A_2\hookrightarrow \tilde E_7
\]
with the bisection $B'$ meeting both simple components.
On the K3 cover $X$, this leads to a section $P$ meeting both $III^*$ fibers non-trivially.

\begin{Claim}
\label{claim}
$P=(0,0)$, the two-torsion section.
\end{Claim}

\begin{proof}
Otherwise we compute the height pairing from the theory of Mordell--Weil lattices \cite{ShMW} between the two sections.
Since both are disjoint from the zero section and meet both $III^*$ fibers non-trivially, the height pairing returns
\[
0 = \langle P,(0,0)\rangle = 2 - 2\cdot \frac 32 - \hdots \leq -1,
\]
giving the required contradiction.
\end{proof}

Recall that $\sigma(A_1)$ is orthogonal to the bisection $B'$ and supported on a fiber
which presently can only be $\tilde A_1$.
But $P$ always meets a different fiber component than the zero section,
hence the above situation can only persist on the Enriques quotient $Y$
if the fiber is ramified (and thus of type $I_2$ so that $\Jac(Y)=X_{321}$ as stated).
This gives the equations listed in Table \ref{T2} under $R=E_8+A_1$ --
since this root lattice is easily verified to be supported on the resulting $(-2)$-curves,
see the following figure.

\begin{figure}[ht!]
\setlength{\unitlength}{.6mm}
\begin{picture}(120,40)(0,0)
%
\multiput(20,0)(20,0){3}{\circle*{1.5}}
\put(20,0){\line(1,0){40}}


\multiput(20,40)(20,0){3}{\circle*{1.5}}
\put(20,40){\line(1,0){40}}



\put(20,0){\line(0,1){40}}
\put(20,20){\line(-1,0){20}}

\put(0,20){\circle*{1.5}}
\put(20,20){\circle*{1.5}}

\put(60,40){\line(1,-1){20}}
\put(60,0){\line(1,1){20}}

\put(80,20){\line(1,0){20}}

\put(100,19){\line(1,0){20}}
\put(100,21){\line(1,0){20}}

\multiput(80,20)(20,0){3}{\circle*{1.5}}
\put(121,14){$A_1$}

%
%

\end{picture}
\caption{$A_8+A_1$ and $E_8+A_1$ supported on fibers of type $III^*, I_2$ and bisection}
\label{fig:321}
\end{figure}
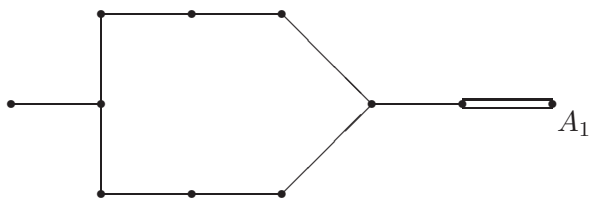

\subsubsection{Other fiber configurations}

The other cases from \eqref{eq:cases} cannot occur.
If $R_0\hookrightarrow \tilde E_8$,
then $\Theta.B'\equiv 0\mod 2$ for any fiber component,
so $\sigma(a_4).B'=1$ would not be possible.

On the other hand, if $A_5+A_1\hookrightarrow \tilde E_6$,
then for $\sigma(a_4).B'=1$ to be possible, the bisection
$B'$ has to intersect two different fiber components.
But then one of them is not contained in the support of $\sigma(A_5+A_1)$,
i.e.~$\sigma$ gives an embedding $A_5+A_1\hookrightarrow E_6$.
As we have used before, this yields an (anti-)effective two-divisible class,
such that Lemma \ref{lem:-2} gives the desired contradiction.

\subsubsection{Conclusion of the proof of Lemma \ref{lem:iso}}

We have started with three cases, but only obtained two irreducible families.
This may sound a bit strange at first, but it will make sense momentarily.
For one thing, one easily checks that the $A_8+A_1$-configuration in 
Figure \ref{fig:321} is primitive;
indeed, one immediately detects a divisor $D$ of Kodaira type $I_8$, 
inducing an elliptic fibration, together with two smooth rational curves $C_1, C_2$
such that $C_i.D=1$, so the $I_8$ fiber is ramified and the curves are bisections,
leading to case \eqref{it2}.

On the hand, one can similarly verify that the Enriques surfaces from \ref{sss:6321}
support the root lattice $A_8+A_1$ both primitively and imprimitively.
This is related to the underlying structure of the singular fibers
induced by the $6$-torsion sections on $\Jac(Y)$.
Depending on how we set up the curves comprising $A_8$ relative to this structure,
they will embed primitively or not. \qed

\begin{Remark}
The different fibrations from \eqref{it1}, \eqref{it3} and \ref{sss:6321} can be connected 
by working out suitable divisors of Kodaira type on the respective graphs of (obvious) $(-2)$-curves,
but we shall not go into the details here.
\end{Remark}

\subsubsection{Moduli components}

For completeness and later reference, we point out that the remaining two moduli components are indeed distinct.
The proof will serve as a prototype for all other cases to follow.

\begin{Corollary}
\label{cor:A8+A1}
The root type $A_8+A_1$ is supported on two distinct families
of Enriques surfaces.
\end{Corollary}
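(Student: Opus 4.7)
The plan is to combine Lemma \ref{lem:iso} with the geometric distinction between the remaining case. By Lemma \ref{lem:iso} the Enriques surfaces arising in case \eqref{it1} ($R$ primitive in $\Num(Y)$) and in case \eqref{it3} ($R$ nodally primitive with $R'=E_8+A_1$ not supported on $(-2)$-curves) form a single one-dimensional family, namely the family obtained in \ref{sss:6321} via the base change construction from $\Jac(Y)=X_{6321}$. It therefore suffices to verify that case \eqref{it2}, in which $R'=E_8+A_1$ is genuinely supported on $(-2)$-curves on $Y$, yields a one-dimensional family which is disjoint in moduli from this first family.

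For the first step I would record that in case \eqref{it2} the computation in \ref{sss:321} produces an elliptic fibration on $Y$ with a singular fiber of type $III^*$, so that Criterion \ref{crit} pins down $\Jac(Y)=X_{321}$ (the isotrivial partner $X_{33}$ having been ruled out there). Lemma \ref{lem:bct} then exhibits $Y$ as arising from an Enriques involution of base change type on $X_{321}$, and following the prescription of Section \ref{s:tech} the possible quadratic twists $S'$ of $X_{321}$ admitting a section $P'$ of the prescribed shape form a one-dimensional family, giving the second family claimed.

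The core of the proof is disjointness. Here the key observation is that the two cases are distinguished by a purely geometric property of $(-2)$-curves on $Y$ itself: in case \eqref{it2} the lattice $E_8\supset A_8$ is \emph{by assumption} supported on smooth rational curves, whereas in cases \eqref{it1} and \eqref{it3} the root type $A_8+A_1$ is \emph{nodally primitive}, so by Definition no proper finite index overlattice of $R$ in $\Num(Y)$ is supported on $(-2)$-curves. These two conditions are mutually exclusive on any single Enriques surface, so no $Y$ can belong to both families simultaneously.

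The main obstacle I anticipate is ruling out the possibility that the closure of the first family contains (or is contained in) the second, since the enhancement of $A_8$ to $E_8$ in the $(-2)$-curves is a closed condition on moduli. To handle this I would argue that the whole family from cases \eqref{it1}/\eqref{it3} is visibly nodally primitive, as the jacobian of the fibration constructed in \ref{sss:6321} is $X_{6321}$, which is not isomorphic to $X_{321}$; any specialization enhancing $A_8$ to $E_8$ on $Y$ would force the same fibration to acquire a $III^*$ fiber, contradicting the extremality $\Jac(Y)=X_{6321}$ (whose reducible fibers have Kodaira types $I_6, I_3, I_2$). Conversely, in case \eqref{it2} the presence of the $E_8$ configuration of $(-2)$-curves is stable throughout the one-parameter family constructed from $X_{321}$. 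Hence the two families define genuinely different components of the moduli space, completing the proof.
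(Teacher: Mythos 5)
Your reduction to ``at most two components'' matches the paper: Lemma \ref{lem:iso} merges cases \eqref{it1} and \eqref{it3}, and \ref{sss:321} handles case \eqref{it2}. The problem is your disjointness argument, which has a genuine gap.

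You claim that nodal primitivity of $A_8+A_1$ and the condition that $E_8+A_1$ be supported on $(-2)$-curves ``are mutually exclusive on any single Enriques surface, so no $Y$ can belong to both families simultaneously.'' But these are properties of a \emph{configuration} $R\subset\Num(Y)$ together with its supporting curves, not invariants of the surface $Y$. A single Enriques surface could a priori carry one configuration of $(-2)$-curves realizing $A_8+A_1$ nodally primitively and, elsewhere in its Picard lattice, a completely different configuration realizing $E_8+A_1$. Indeed, Lemma \ref{lem:iso} --- which you invoke --- is precisely an instance of this phenomenon: cases \eqref{it1} and \eqref{it3} are lattice-theoretically incompatible as embeddings (primitive versus imprimitive), yet the \emph{same} surfaces support both. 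So mutual exclusivity of the defining conditions does not yield disjointness of the families. Your fallback argument suffers from the same defect: showing that the particular fibration with $\Jac(Y)=X_{6321}$ cannot acquire a $III^*$ fiber only rules out the enhancement of the \emph{given} $A_8$ to an $E_8$; it says nothing about an unrelated $E_8$ configuration on the same surface (and a single Enriques surface generally admits many elliptic fibrations with different jacobians --- the paper exhibits $X_{9111}$, $X_{8211}$ and $X_{6321}$ all as jacobians on the surfaces of case \eqref{it1}/\eqref{it3}).

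The paper closes this gap by passing to an honest invariant of the surface: the transcendental lattice (equivalently, the discriminant) of the covering K3 surface, computed via Nikulin's discriminant forms to be generically $U+\langle 36\rangle$ for cases \eqref{it1}/\eqref{it3} and $U+\langle 4\rangle$ for case \eqref{it2}. Since these are non-isomorphic, the generic members of the two one-dimensional families are non-isomorphic surfaces, and the families are distinct. You need some such surface-level invariant (or a direct argument excluding any $E_8$ configuration on the generic member of the first family) to complete the proof.
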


\begin{proof}
The above discussion shows that there are at most two components.
It remains to prove that
 \eqref{it1} and \eqref{it2} are indeed not equivalent.
 For instance, this can be read off from the  transcendental lattices of the covering K3 surfaces,
 or already from their discriminants.
The former turns out to be generically
 $U+\langle 36\rangle$ for \eqref{it1},
 and  $U+\langle 4\rangle$ for \eqref{it2}
 (calculated using discriminant forms as laid out in Section \ref{s:latt}).
\end{proof}

\subsection{Root type $R=E_8+A_1$}

We have already worked out a family supporting the root type $R=E_8+A_1$ in \ref{ss:321}.
This could be obtained directly from $R$ by considering the isotropic vector $E=(e_8^\vee,0)+h$
which has exactly $R_0=E_7+A_1$ and a bisection $B'$ induced from $e_8$.
Indeed, this behaves exactly as in \ref{ss:321}.

\begin{Remark}
The family supporting the root type $E_8+A_1$ is remarkable since it comprises Enriques surfaces with finite automorphism group
(Kondo's type I in \cite{Kondo-Enriques}).
We remind the reader  that an Enriques surface has finite automorphism group
if and only if the set of smooth rational curves is non-empty, yet finite.
(Presently, there are exactly 12 smooth rational curves, i.e.~one more than displayed in Figure \ref{fig:321}.)
\end{Remark}

\subsection{Root type $D_6+3A_1$}
\label{sss2}
\label{sss:D_6+3A_1}

Proposition \ref{prop:base} and Table \ref{T1} tell us that
there is an elliptic fibration on $Y$ with 
root lattices $D_6+2A_1$ embedding into the singular fibers.
By Criterion \ref{crit},
the rational elliptic surface $\Jac(Y)$ is extremal.
Note that 
none of the resulting $(-2)$-divisors is met by the smooth rational bisection $B'$
(induced by a generator $a_1$ of the third summand $A_1$).
As before, this implies
that the reducible fibers
have types $I_2^*, I_2, I_2$,
i.e. $\Jac(Y) = X_{222}$,
and that a suitable composition of reflections $\sigma$ takes all the $(-2)$-curves generating $R_0$ to fiber components
(as stated in Proposition \ref{prop:curves}).
On the quadratic twist $S'$,
this leads to an integral section $P'$ of height $h(P')=4$ as in \ref{sss:0f},
but with the advantage of having a 2-torsion section, so that \ref{sss:2t} kicks in.
Indeed one can easily parametrise the quadratic twists with section $P'$ as required,
but as indicated in Proposition \ref{prop:unique}, this leads to two parametrizations,
one for each case from \ref{sss:2t}:
\[
x(P') = (t+\lambda)^2/4\lambda
\;\;\;
\text{ and } \;\;\; x(P') = (\mu^2t^2-2\mu(\mu-1)t+(\mu+1)^2)/4\mu.
\]
To see that these parametrizations give the same family of Enriques surfaces
(without having to determine them explicitly, in fact),
we switch to an auxiliary fibration as follows.
Consider the divisor $D=d_0+B'$ of Kodaira type $I_2$ on $Y$
where $d_0$ denotes the component of the original $I_2^*$ fiber met doubly by the bisection $B'$
(compare Figure \ref{fig:D6+3A1}).
Since $\sigma(d_2).D=1$,
$D$ is a half-pencil and $\sigma(d_2)$ a bisection for the elliptic fibration
induced by $|2D|$.
Naturally this has $D$ as a multiple singular fiber and the other singular fibers containing 
what's orthogonal to $D$:
\begin{eqnarray}
\label{eq:D4+3A1}
D_4=\langle \sigma(d_3),\hdots, \sigma(d_6)\rangle,\;\; A_1=\Z \sigma(d_1),\;\; 2A_1
\end{eqnarray}
where the last two summands stem from the original root type
(or equivalently from the two $I_2$ fibers -- the components not met by $B'$).
By criterion \ref{crit},
$\Jac(Y)$ is an extremal rational elliptic surface,
so we can use \cite{MP} to work out that it can only be $X_{222}$ again.
In particular, there is a fiber of type $I_2^*$ comprising $D_4$, two $A_1$'s and a connecting $(-2)$-curve.
Recall that the bisection $\sigma(d_2)$ meets the fiber in simple components;
by inspection, these have to be $\sigma(d_1), \sigma(d_3)$, since $\sigma(d_2)$ 
is perpendicular to all other $(-2)$-curves in \eqref{eq:D4+3A1}.
The next figure sketches the resulting configuration of smooth rational curves (up to exchanging $\sigma(d_5), \sigma(d_6)$).

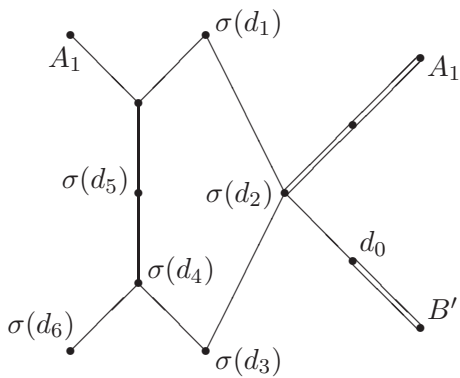
\begin{figure}[ht!]
\setlength{\unitlength}{.6mm}
\begin{picture}(90,78)(0,0)

\multiput(0,0)(30,0){2}{\circle*{1.5}}
\put(0,0){\line(1,1){15}}
\put(30,0){\line(-1,1){15}}

\put(-14,4){$\sigma(d_6)$}
\put(32,-4){$\sigma(d_3)$}

\multiput(0,70)(30,0){2}{\circle*{1.5}}

\put(0,70){\line(1,-1){15}}
\put(30,70){\line(-1,-1){15}}

\put(-5,63){$A_1$}
\put(32,71){$\sigma(d_1)$}

\multiput(15,15)(0,20){3}{\circle*{1.5}}
\put(15,15){\line(0,1){40}}

\put(17,16){$\sigma(d_4)$}

\put(-2,36){$\sigma(d_5)$}

\put(47.5,35){\circle*{1.5}}
\put(30,33){$\sigma(d_2)$}

\put(30,0){\line(1,2){17.5}}

\put(30,70){\line(1,-2){17.5}}

\put(47.5,35){\line(1,-1){15}}

\multiput(62.5,20)(15,-15){2}{\circle*{1.5}}
\put(64,22){$d_0$}
\put(79,7){$B'$}

\put(62.1,19.6){\line(1,-1){15}}
\put(62.9,20.4){\line(1,-1){15}}

\multiput(62.5,50)(15,15){2}{\circle*{1.5}}
\put(79,61){$A_1$}

\put(47.1,35.4){\line(1,1){30}}
\put(47.9,34.6){\line(1,1){30}}

%
%
%
%
%
%
%
%
%

%
%

\end{picture}
\caption{Induced fibration for $D_6+3A_1$}
\label{fig:D6+3A1}
\end{figure}

On the K3  cover $X$, the bisection $\sigma(d_2)$ splits into disjoint sections $O,P$.
Since there are correction terms  twice $3/2$ from the two $I_2^*$ fibers and $1$ from the ramified $I_4$ fiber,
we infer that $P$ has height zero.
That is, $P$ is two-torsion, so up to exchanging the two $I_2$ fibers, $x(P')$ is as in 
Table \ref{T2}.

\begin{Remark}
\label{rem:prim}
We point out that the $(-2)$-curves in Figure \ref{fig:D6+3A1}
also support the root types $E_7+2A_1$ and $A_7+2A_1$.
This is a rather special situation:
the root type $D_6+3A_1$  
automatically comes with the overlattice $E_7+2A_1$ (and with $A_7+2A_1$)
supported on $(-2)$-curves.
This is reflected in Table \ref{T2}
by listing for $R$ the same equations as for $E_7+2A_1$.
\end{Remark}

\subsection{Ramified representations}

The isotropic vectors in Table \ref{T1} were chosen in very  natural way relative to the gluing 
obtained from the discriminant groups.
In particular, these choices came with the extra benefit of leading
to one-dimensional families of Enriques surfaces with reduced fiber configurations
(a very natural result recorded in Proposition \ref{prop:ex}).
In terms of explicit equations, however, they are often quite complicated and thus hard to present.
For several root types (to be listed below),
Table \ref{T2} therefore includes  different equations
which make for a nicer representation.
In particular, they always involve a ramified fiber
which together with the two-torsion section is derived in the same way as in \ref{ss:321} (see especially Claim \ref{claim}).
For shortness, we just list the root types, the alternative isotropic vectors $\hat E$, the resulting root lattice $R_0$,
and the fiber configurations.

$$
\begin{array}{cccc}
R & \hat E & R_0 & \text{fiber configuration}\\
\hline
D_5+A_3+A_1 & d_2^\vee+h & 2A_3+2A_1 & 2I_4,I_4, I_2, I_2\\
E_8 + A_1 & e_8^\vee + h & E_7+A_1
& III^*, 2I_2, I_1\\
E_7+A_2 & e_3^\vee+h & A_5+A_2+A_1
& 2I_6, I_3, I_2, I_1\\
E_7+2A_1 & e_2^\vee+h & D_6+2A_1
& I_2^*, 2I_2, I_2\\
E_6+A_2+A_1 & e_1^\vee+h & A_5+A_2+A_1
& I_6, I_3, 2I_2, I_1
\end{array}
$$

\subsection{Proof of Theorem \ref{thm:any}}

We claim that the above considerations suffice to prove Theorem \ref{thm:any}.
To see this, recall from Corollary \ref{cor:by} that in most cases, $R_0$ determines the configuration of singular fibers,
and the Enriques surfaces can be found in Table \ref{T2}.
Otherwise, as recorded in Corollary \ref{cor:unique},
the configuration of singular fibers differs from the sum of the $\tilde R_v$,
or there are multiple fibers.
In either case, by Observation \ref{obs} 
the resulting $(-2)$-curve support a proper finite index overlattice of $R$
which the family files under.
\qed

\subsection{Proof of main part of Theorem \ref{thm}}
\label{ss:pf}

We emphasize that this proves all of Theorem \ref{thm} except for \textit{(ii)}
which will be the subject of the next section.

\section{Components of the moduli spaces}
\label{s:number}

In order to complete the proof of Theorem \ref{thm},
it remains to study the number of components of the moduli spaces
for each of the root types from Theorem \ref{thm:31}
which are not already covered by Corollary \ref{cor:unique}.
For an example, recall from Corollary \ref{cor:A8+A1}
that the root type $A_8+A_1$ is supported on two distinct families of Enriques surfaces:
one with reduced fiber configuration determined by the data in Table \ref{T1}, 
the other  associated to the overlattice $E_8+A_1$ (Kond\=o's family of type I).
In accordance with Theorem \ref{thm} \textit{(ii)},
we claim that this situation is not far from optimal:

\begin{Proposition}
\label{prop:number}
There are exactly two maximal root types which are
supported on three distinct one-dimensional families of Enriques surfaces:
\[
A_7+2A_1,\;\;\; 3A_3.
\]
All other moduli spaces of Enriques surfaces have either one or two components.
More precisely, those types with two moduli components are given in Table \ref{T3}.
\end{Proposition}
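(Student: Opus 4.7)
The plan is to establish a bijection between the components of moduli for a given root type $R$ from Theorem \ref{thm:31} and the admissible \emph{nodal primitive closures} of $R$ --- that is, lattices $R \subseteq R'' \subseteq \Num(Y)$ of rank nine that can actually arise as the full root sublattice supported by $(-2)$-curves on some Enriques surface $Y$. Any such $R''$ is again one of the 31 types from Theorem \ref{thm:31} (it has rank nine and supports a configuration producing a $\Q$-homology projective plane) and necessarily satisfies the discriminant-form compatibility of Lemma \ref{lem:R}. Conversely, for every lattice-theoretically admissible $R''$, Proposition \ref{prop:base} together with the constructions of Section \ref{s:fam} produces a one-dimensional family realising it.

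To separate these families I use the invariant already exploited in Corollary \ref{cor:A8+A1}: the transcendental lattice (equivalently, the N\'eron--Severi lattice) of the universal K3 cover $X$ of $Y$. Since each $(-2)$-curve on $Y$ pulls back to a disjoint pair of $(-2)$-curves on $X$, a strictly larger $R''$ forces $2R''$ to embed into $\NS(X)$, and Nikulin's gluing theory translates this into a change in the discriminant form of the transcendental lattice. Hence inequivalent $R''$'s yield genuinely distinct moduli components. The main subtlety --- already visible in Lemma \ref{lem:iso} --- is that different lattice-theoretic setups (for instance different isotropic vectors $E$ in Table \ref{T1}) may in fact describe the same family, so in each potentially ambiguous case one must compare invariants to decide between collapse and genuine novelty.

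With this framework the proof reduces to a direct enumeration over the 31 types. For each $R$ I list the finite-index overlattices $R \subseteq R''$ inside $U+E_8$ that correspond to root lattices supported on $(-2)$-curves, using the discriminant forms computed in Section \ref{s:latt}. For the vast majority of types either $R$ itself is the only option (one component), or exactly one proper overlattice is admissible (two components, as in the worked case $A_8+A_1$). The main obstacle is precisely to rule out over-counting coming from the Lemma \ref{lem:iso} mechanism, and this is handled case by case by comparing the K3-cover invariants.

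Finally I isolate the two exceptional types. For $R=A_7+2A_1$ the overlattice chains $A_7 \subset E_7$ and $A_7+A_1 \subset D_8$ produce four candidates $A_7+2A_1$, $E_7+2A_1$, $D_8+A_1$, $E_8+A_1$; a direct analysis based on the base change constructions from the extremal surfaces indicated in Section \ref{s:glue} shows that exactly three of these yield distinct families, a single pair collapsing as in Lemma \ref{lem:iso}. For $R=3A_3$ the chains $2A_3 \subset D_6$ and $D_6+A_3 \subset D_9$ give the three candidates $3A_3$, $D_6+A_3$, $D_9$; all three are realised, and the transcendental lattice invariants of their K3 covers are pairwise different, so no collapse occurs. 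In each exceptional case the third component is established by producing an explicit representative via Section \ref{s:fam} and verifying, by these invariants, that it is inequivalent to the two components coming out of the more uniform analysis.
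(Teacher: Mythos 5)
Your overall frame --- moduli components correspond to the possible nodal primitive closures $R''$ of $R$ realized by $(-2)$-curves, distinguished via the N\'eron--Severi/transcendental lattices of the K3 covers --- matches the paper's, but two things go wrong. First, the admissibility of a candidate $R''$ is not a pure lattice-theoretic matter that Lemma \ref{lem:R} can settle: that lemma only governs embeddings into $\Num(Y)$, whereas the real question is whether the specific configuration of $(-2)$-curves on the surfaces realizing $R''$ contains a subconfiguration of type $R$, and conversely whether every surface supporting $R$ has one of the enumerated candidates as its full nodal root type. The paper resolves this by fixing an isotropic vector $E$, enumerating the possible nodal primitive closures of $R_0=E^\perp\cap R$ inside the singular fibers of the induced fibration via \cite{MP}, pinning down the bisection's intersection pattern with Lemma \ref{lem:bi}, and ruling out hypothetical extra multisections by forcing the resulting Gram matrix to have rank ten (see \ref{sss-4}); nothing in your proposal replaces these steps. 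The subtlety is not hypothetical: the paper shows that $A_7+2A_1$ is supported on the $E_8+A_1$ family, but \emph{not} with $A_7+A_1$ embedded in a divisor of type $II^*$ --- a distinction invisible to a purely lattice-theoretic enumeration.

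Second, your case analysis for $A_7+2A_1$ is incorrect. $D_8+A_1$ is not a finite-index even overlattice of $A_7+2A_1$: the only nontrivial isotropic element of order two in the discriminant form of $A_7+A_1$ is $4a_1^\vee$, yielding $E_7+A_1$, so the chain of root overlattices is $A_7+2A_1\subset E_7+2A_1\subset E_8+A_1$. All three are realized and pairwise distinct (K3-cover discriminants $64$, $16$, $4$ in \ref{sss-1a}--\ref{sss-2}); there is no collapse of a pair among four candidates and no instance of the Lemma \ref{lem:iso} mechanism for this type, so you reach the correct count of three by a mechanism that does not describe what actually happens. The $3A_3$ chain you state correctly ($3A_3\subset D_6+A_3\subset D_9$), but even there the paper must work through five candidate overlattices of $R_0=2A_3+2A_1$ and eliminate three of them by the geometric arguments above.
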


The following table collects the root types $R$ with exactly two moduli components.
It lists the orthogonal complement $R_0$ and  configurations of singular fibers with multiplicities,
both for $R$ and some overlattice $\hat R$ supported on the second family (cf.~Observation \ref{obs}).

\begin{table}[ht!]
$$
\begin{array}{|ccc|cc|}
\hline
R & R_0& \text{fibers} & \hat R & \text{fibers}\\
\hline
A_8+A_1 & A_7+A_1 & I_8, I_2 & E_8+A_1 & 2I_8, I_2\\
A_7 + A_2 & A_5+A_2+A_1 & I_6,I_3,I_2 & E_7 + A_2 & 2I_6, I_3, I_2\\
A_5+A_3+A_1 & 2A_3+2A_1 & I_4,I_4,I_2,I_2 & E_6+A_3 & I_1^*, I_4\\
A_5+2A_2 & A_5+A_2+A_1 & I_6,I_3,I_2 & E_7+A_2 & 2I_6,I_3,I_2\\
D_8+A_1 & A_7+A_1 & I_8, I_2 & E_8+A_1 & 2I_8, I_2
\\
D_6+A_3 & D_6+2A_1& I_2^*, I_2,I_2 & D_9 & I_4^*
\\
D_4+A_3+2A_1 & 2A_3+2A_1& I_4,I_4,I_2,I_2 
& D_7+2A_1 & 2I_4,I_4,I_2,I_2
\\
\hline
\end{array}
$$
\caption{Root types with exactly two moduli components}
\label{T3}
\end{table}

Note that Proposition \ref{prop:number} directly gives Theorem \ref{thm} \textit{(ii)},
thus completing the proof of Theorem \ref{thm} in view of \ref{ss:pf}.

The proof of Proposition \ref{prop:number} follows quite closely the lines of the previous sections,
but we have to take the different possible fiber configurations and multiplicities into account.
To most extent, this parallels what we did for root type $R=A_8+A_1$ in \ref{ss2:A_8+A_1},
so we only sketch the details for the two root types supported on 3 moduli components.

\subsection{Root type $R=A_7+2A_1$}
\label{ss:A7+2A1}
\label{ss:721}

For the root type $R=A_7+2A_1$, 
we have already seen two components of the moduli space:
on the one hand clearly the one displayed in Table \ref{T2} under $R$;
on the other hand related to the root type $D_6+3A_1$ (and $E_7+2A_1$) in Figure \ref{fig:D6+3A1},
compare Remark \ref{rem:prim}.
Furthermore, it is supported on
Kond\=o's family of type I,
i.e.~the family of root type $E_8+A_1$
as one can easily locate the root type $A_7+2A_1$
in the graph of the 12 $(-2)$-curves on these Enriques surfaces from \cite[Fig.~1.4]{Kondo-Enriques}.
We claim that these three families are distinct,
and that they comprise all Enriques surfaces supporting the root type $R$.

To exhibit a complete argument,
recall the data from table \ref{T1} with isotropic vector
\[
E = a_4^\vee + h, \;\;\; \text{ and } \;\;\; R_0=2A_3+2A_1.
\]
In the first instance, this leads to singular fibers given as $\tilde R_v$,
so $\Jac(Y)=X_{4422}$ with equations as in Table \ref{T2}.
The other configurations of singular fibers admitting an embedding of $R_0$ are the following:
\begin{eqnarray}
\label{eq:721}
\tilde D_5+\tilde A_3, \;\; \tilde D_6+2\tilde A_1, \;\; \tilde E_7+\tilde A_1, \;\; \tilde D_8, \;\; \tilde E_8,
\end{eqnarray}
but among these all except for $\tilde D_6+2\tilde A_1$ and $\tilde E_7+\tilde A_1$ can be ruled out
by arguing with the smooth rational bisection $B'$ induced by $a_4$.
In the case of $\tilde E_7+\tilde A_1$, we are exactly in the situation of \ref{ss:321},
so in particular $B'$ splits into zero section and two-torsion section 
and $Y$ supports the root type $E_8+A_1$.
It thus remains to study the case where $\Jac(Y)=X_{222}$.

\begin{Claim}
\label{claim2}
If $R_0\hookrightarrow \tilde D_5+\tilde A_3$, then
$B'$ splits into zero section and two-torsion section on the K3 cover.
\end{Claim}

\begin{proof}
Assume to the contrary that the section $P\in\MW(X)$ mapping to $B'$
is not two-torsion.
At any rate, it meets both $I_2^*$ fibers in far simple components
(not the one close to the identity component).
It follows that either there is a two-torsion section $Q$ meeting both fibers in the same component,
so that
\[
\langle P,Q\rangle = 2 - 2\cdot\frac 32 - \hdots \leq -1,
\]
or each two-torsion section $Q$ meets once the same and once a neighbouring component,
hence
\[
\langle P,Q\rangle = 2 - \frac 32 - 1 - \hdots \leq -\frac 12.
\]
In either case, the height pairing does not return zero despite $Q$ being torsion,
so we obtain the required contradiction.
\end{proof}

From Claim \ref{claim2}, it follows directly as in \ref{ss:321}
that one of the $I_2$ fibers is ramified.
Up to symmetry, $Y$ is thus given by the data $P=(0,0), q=t(t-\lambda)$,
as listed in Table \ref{T2} under the root types $D_6+3A_1, E_7+2A_1$.

It remains to prove that the three families of Enriques surfaces thus derived
are indeed distinct.
As natural as this may seem, this kind of statement is non-trivial as we have experienced
for the root type $D_6+3A_1$ in \ref{sss:D_6+3A_1}.
Presently, it can be achieved  by computing the discriminants 
of the generic N\'eron-Severi lattices of the covering K3 surfaces.
Here we can use \cite[\S 11 (22)]{SSh} to directly read off
discriminants $16$ for the second family and $4$ for the third.
Meanwhile the first family is endowed with a section $P$ mapping to $B'$
which generically meets four $I_4$ fibers in a component adjacent to the identity component.
Hence its height generically equals
\[
h(P) = 4 -3\cdot \frac 34 = 1.
\]
After verifying that  $P$ generically generates $\MW(X)$ together with the torsion sections
(i.e.~neither $P$ nor its translates by torsion sections are two-divisible
which can be checked both explicitly or abstractly),
formula \cite[\S 11 (22)]{SSh} again applies to calculate discriminant $64$.
Hence all three families are indeed distinct, 
and Proposition \ref{prop:number} for $R=A_7+2A_1$ follows.
\qed

\subsection{Root type $3A_3$}
\label{ss:3A3}

The root type $R=3A_3$ behaves quite similar to the previous one.
Especially, this appeals to the fact
that the isotropic vector $E=(a_2^\vee,0,0)+h/2$
yields the same perpendicular root lattice $R_0=2A_3+2A_1$.
The main difference lies in the intersection behaviour with the bisection $B'$
which presently meets the $A_1$ summands (as opposed to the $A_3$ summands).
This directly leads to two configurations of singular fibers,
\[
2\tilde A_3+2\tilde A_1, \;\;\; \tilde D_5+\tilde A_3
\]
(as reflected in Corollary \ref{cor:by});
in each case, there is a family with all fibers generically reduced,
but there is also another family with reduced $I_4$ fiber
(and $B'$ splitting off a two-torsion section on the K3 cover,
compare Table \ref{Ty}). 
Each family is easily parametrized (uniquely up to symmetry), see for instance, the entries in Table \ref{T2}
under root types $3A_3$ and $D_6+A_3$.

In order to distinguish moduli components,
we proceed by computing the corresponding generic discriminants of the K3 covers
using  \cite[\S 11 (22)]{SSh}:

$$
\begin{array}{ccc}
\Jac(Y) & \text{fiber configuration} & \text{discriminant}\\
\hline
X_{4422} & I_4,I_4,I_2,I_2 & 128\\
& 2I_4, I_4, I_2, I_2 & 32\\
X_{141} & I_1^*, I_4, I_1 & 32\\
& I_1^*, 2I_4, I_1 & 8
\end{array}
$$

Hence there are at least 3 moduli components,
and it remains to prove that the second and third families agree.
To this extent, we argue with the third family and the $(-2)$-curves which it is naturally endowed with:

\begin{figure}[ht!]
\setlength{\unitlength}{.6mm}
\begin{picture}(120,35)(0,0)

\multiput(0,0)(50,0){2}{\circle*{1.5}}

\multiput(0,30)(50,0){2}{\circle*{1.5}}

\multiput(15,15)(20,0){2}{\circle*{1.5}}

\multiput(65,15)(20,0){2}{\circle*{1.5}}

\multiput(100,0)(0,30){2}{\circle*{1.5}}

\put(115,15){\circle*{1.5}}

\multiput(0,0)(50,0){3}{\line(1,1){15}}

\multiput(0,30)(50,0){3}{\line(1,-1){15}}

\multiput(50,0)(50,0){2}{\line(-1,1){15}}

\multiput(50,30)(50,0){2}{\line(-1,-1){15}}

\put(15,15){\line(1,0){20}}

\put(65,14.5){\line(1,0){20}}

\put(65,15.5){\line(1,0){20}}

\put(2,14){$A_3$}
\put(102,14){$A_3$}

\put(52,-5){$\sigma(a_3)$}

\put(52,32){$\sigma(a_1)$}

\put(64,8){$\sigma(a_2)$}

\put(15,8){$d_3$}

\put(30,19){$d_2$}
%
%
%
%
%
%
%
%
%
%
%
%
%
%
%
%
%
%
%
%
%
%

\end{picture}
\end{figure}

Note that this supports the root type $3A_3$,
but also $D_6+A_3$.
To extract the second family, consider the divisor $D=\sigma(a_1)+\sigma(a_2)+\sigma(a_3)+d_2$ of Kodaira type $I_4$.
Since it is met by the smooth rational curve $d_3$ with multiplicity one,
$D$ is a half-pencil and $d_3$ a  bisection.
Thus, by inspection of the figure, $|2D|$ exactly leads to the second configuration of singular fibers (with multiplicities).

%
%
%
%
%
%
%
%
%

We conclude that the root type $3A_3$ admits three moduli components as stated in
Proposition \ref{prop:number}. \qed

\subsection{Conclusion}
For the sake of brevity, we omit the analysis of the root types supported on two moduli components
(which follows the same lines of arguments).
This completes the proof of Theorem \ref{thm}. \qed

%
%
%
%
%

\subsection*{Acknowledgements}

Thanks to S\l awomir Rams and Chris Peters for  discussions on the subject
and to JongHae Keum for bringing this problem to my attention and helpful comments.

\end{document}